\newtheorem{theorem}{\bf Theorem}[section]
\newtheorem{lemma}[theorem]{\bf Lemma}
\newtheorem{proposition}[theorem]{\bf Proposition}
\newtheorem{corollary}[theorem]{\bf Corollary}
\author[C. Acciarri]{Cristina Acciarri}
\address{Department of Mathematics, University of Bras\'ilia,~70910-900 Bras\'ilia DF, Brazil}
\email{acciarricristina@yahoo.it}
\author[D. Silveira ]{Danilo Silveira}
\address{Department of Mathematics, Federal University of Goi\'as,~75704-020 Catal\~ao GO, Brazil}
\email{sancaodanilo@ufg.br}
\keywords{Profinite groups; automorphisms; centralizers; Engel-like conditions}
\subjclass[2010]{Primary 20E18, 20E36; Secondary 20F45, 20F40, 20D45, 20F19}
\thanks{This work was supported by the Conselho Nacional de Desenvolvimento Cient\'{\i}fico e Tecnol\'ogico (CNPq),  and Funda\c c\~ao de Apoio \`a Pesquisa do Distrito Federal (FAPDF), Brazil.}
\title[Engel-like conditions in fixed points of automorphisms]{Engel-like conditions in fixed points \\of automorphisms of profinite groups}
\begin{document}

\begin{abstract} Let $q$ be a prime  and $A$ an elementary abelian $q$-group acting as a coprime  group of automorphisms on a profinite group $G$. 
	
We show that if $A$ is of order $q^2$ and  some power of each element in $C_G(a)$ is Engel in $G$ for any $a\in A^{\#}$, then $G$ is locally virtually nilpotent.

Assuming  that $A$ is of order $q^3$ we prove that if some power of each element in $C_G(a)$ is Engel in $C_G(a)$ for any $a\in A^{\#}$, then $G$ is locally virtually nilpotent.
	
Some  analogues consequences of  quantitative nature  for finite groups are also obtained. 
\end{abstract}

\maketitle
\section{Introduction}
A profinite group is a topological group that is isomorphic to an inverse limit of finite groups. In the context of profinite groups all the usual concepts of groups theory are interpreted topologically. In particular, by a subgroup of a profinite group we always mean a closed subgroup and   a subgroup is said to be generated by a set $S$ if it is topologically generated by $S$. See, for example, \cite{W} for these and other properties of profinite groups.
Many remarkable results on profinite groups were deduced using  the Lie-theoretic machinery developed for  the solution of the restricted Burnside problem \cite{Z0,Z1,zenew}. For instance, using Wilson's reduction theorem \cite{W2}, Zelmanov proved that a profinite group is locally finite if and only if it is torsion \cite{zelmanov}.
 Recall that a  group $G$ is said to have a certain property locally if any finitely generated subgroup of $G$ possesses that property.  We say that a group $G$ is torsion if all of its elements have finite order.  
 
Another  well-known result of Wilson and Zelmanov \cite[Theorem 5]{WZ}  tells us that a profinite group is locally nilpotent if and only if it is Engel. If  $x,y$ are elements of a (possibly infinite) group $G$, the commutators $[x,_n y]$ are defined inductively by the rule
$$[x,_0 y]=x,\quad [x,_n y]=[[x,_{n-1} y],y]\quad \text{for all}\, n\geq 1.$$
Recall that an element $x$ is called a (left) Engel element if for any $g\in G$ there exists $n$, depending on $x$ and $g$, such that $[g,_n x]=1$.  A group $G$ is called Engel if all elements of $G$ are Engel. The element $x$ is called a (left) $n$-Engel element if for any $g\in G$ we have $[g,_n x]=1$. The group $G$ is $n$-Engel if all elements of $G$ are $n$-Engel.

Later on, in \cite{BS}, Bastos and Shumyatsky considered profinite groups with Engel-like conditions. They showed in \cite[Theorem 1.1]{BS} that  if $G$ is a profinite group in which for every element $x\in G$  there exists a natural number $q = q(x)$ such that $x^q$ is Engel, then $G$ is locally virtually nilpotent. We recall that a profinite group posses a certain property virtually  if it has an open subgroup with that property.  Note that the  previous result can be  viewed as a common generalization of both the  Wilson--Zelmanov results on profinite groups stated above.

By an automorphism of a profinite group we mean a continuous automorphism. We say that a   group $A$  acts  on a profinite group $G$ coprimely  if $A$ has finite order while $G$ is an inverse limit of finite groups whose orders are relatively prime to the order of $A$.  
In the literature   there are many well-known results  showing that if  $A$ is a  finite group acting on a finite group $G$ in such a manner that $(|A|,|G|)=1$, then  the structure of the centralizer $C_G(A)$ (the fixed-point subgroup) of $A$ has a strong influence over the structure of $G$ (see for instance \cite{CPD, KS,Pavel-law,shusa}).  A similar phenomenon  holds in the realm of profinite groups:  we see that imposing restrictions on centralizers of coprime automorphisms result in very specific structures for the group $G$. Given an automorphism $a$ of a profinite group $G$, we denote by $C_G(a)$ the centralizer of $a$ in $G$, that is, the subgroup formed by the elements  fixed under $a$. In particular, the following theorems were  established in \cite[Theorem 1.1]{eu} and \cite[Theorem B2]{CPD}, respectively. 
 
\begin{theorem}\label{periodic} 
Let $q$ be a prime and  $A$   an elementary abelian $q$-group of order at least $q^2$ acting  coprimely on a profinite group $G$. Assume that the centralizer $C_{G}(a)$ is torsion for each $a\in A^{\#}$. Then $G$ is locally finite.
\end{theorem}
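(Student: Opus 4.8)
The plan is to deduce the result from Zelmanov's theorem that a torsion profinite group is locally finite, so that it suffices to prove $G$ is torsion, and then to reduce this torsion statement to quantitative information about finite groups. First I would reduce to the case $|A|=q^2$: since $A$ is elementary abelian of order at least $q^2$, it contains a subgroup $B$ of order $q^2$, and $B$ acts coprimely on $G$ with $C_G(b)$ torsion for every $b\in B^{\#}\subseteq A^{\#}$; as the conclusion does not mention $A$, proving the theorem for $B$ suffices. Next, local finiteness means that every finitely generated closed subgroup is finite, and replacing such a subgroup $H_0$ by its $A$-invariant closure $H=\langle H_0^{A}\rangle$ (still finitely generated, being generated by the finite set $\{x^{a}:x\in X,\ a\in A\}$ for a finite generating set $X$ of $H_0$) I may assume $H$ is $A$-invariant. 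The point of this reduction is the elementary but decisive observation that a finitely generated torsion profinite group is automatically finite: it is locally finite by Zelmanov's theorem, and being one of its own finitely generated closed subgroups it is therefore finite. Hence it is enough to prove that each such $H$ is torsion.

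For the torsion statement I would use the coprime structure theory for rank-two actions. Since $A$ has rank two, in every finite quotient $\bar H=H/N$ with $N$ open $A$-invariant normal one has $\bar H=\langle C_{\bar H}(a):a\in A^{\#}\rangle$, and coprimeness gives $C_{\bar H}(a)=C_H(a)N/N$; passing to the inverse limit yields the decomposition $H=\langle C_H(a):a\in A^{\#}\rangle$, where $a$ ranges over representatives of the $q+1$ subgroups of order $q$ in $A$. Each $C_H(a)$ is a closed subgroup of the torsion group $C_G(a)$, hence torsion, hence locally finite by Zelmanov. Thus $H$ is topologically generated by finitely many locally finite closed subgroups bound together by the coprime action, and the task is to show that this forces $H$ itself to be torsion.

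The heart of the matter, and the step I expect to be the main obstacle, is exactly this last implication, because a group generated by finitely many torsion subgroups need not be torsion in the absence of further structure. Here I would pass back to the finite quotients $\bar H=H/N$ and invoke a quantitative coprime result for finite groups admitting an elementary abelian action of rank two, bounding an invariant of $\bar H$ (for instance its order relative to $C_{\bar H}(A)$, or its exponent) in terms of the corresponding invariants of the fixed-point subgroups $C_{\bar H}(a)$ and of $q$. The delicate point is that the centralizers are only assumed torsion and so a priori have unbounded exponent and order across the family $\{\bar H\}$, so no single bound is available globally; the resolution is to exploit the finite generation of $H$, which confines the relevant centralizer data to the locally finite subgroups $C_H(a)$ and, combined with the coprime commutator identities such as $[H,A]=[[H,A],A]$, lets one control the order of an arbitrary element of $H$. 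Once $H$ is shown to be torsion it is finite by the reduction above, and letting $H$ range over all finitely generated subgroups gives that $G$ is locally finite.
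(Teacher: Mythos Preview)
First, a clarification: the paper does not give its own proof of this theorem. Theorem~\ref{periodic} is quoted as a known result from Shumyatsky's earlier paper \cite{eu}, and the present paper only summarizes the ingredients of that proof in the paragraph following the statement: Lie-theoretic results of Zelmanov from the restricted Burnside problem, Lazard's criterion for a pro-$p$ group to be $p$-adic analytic, the Bahturin--Zaicev theorem on PI Lie algebras with automorphisms, and finally Zelmanov's theorem that torsion profinite groups are locally finite. So the comparison is between your proposal and that outline.

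Your reductions are correct and standard: passing to a subgroup $B$ of order $q^2$, replacing a finitely generated subgroup by its $A$-invariant closure, and observing that it suffices to prove the group is torsion (since a finitely generated torsion profinite group is finite by Zelmanov). The decomposition $H=\langle C_H(a):a\in A^{\#}\rangle$ is also valid. But the last paragraph, which is supposed to carry the entire weight of the argument, is not a proof. You correctly identify the obstacle---a group generated by torsion subgroups need not be torsion, and the centralizers $C_{\bar H}(a)$ have no uniform exponent bound across the finite quotients---but the sentence ``the resolution is to exploit the finite generation of $H$\dots combined with the coprime commutator identities such as $[H,A]=[[H,A],A]$, lets one control the order of an arbitrary element of $H$'' does not explain any mechanism by which such control is achieved. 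The identity $[H,A]=[[H,A],A]$ gives no information about orders of elements of $H$ itself, and finite generation alone does not let you pin down the order of a single element when the centralizers' exponents are unbounded.

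This is precisely why the actual proof in \cite{eu} does not proceed by elementary coprime-action bookkeeping but instead passes to the associated Lie algebra $L_p(G)$, uses Bahturin--Zaicev to obtain a polynomial identity, uses Lazard's lemma to get ad-nilpotence of homogeneous elements coming from torsion elements, applies Zelmanov's theorem on nilpotency of PI Lie algebras generated by ad-nilpotent elements, and then invokes Lazard's criterion to conclude that the relevant pro-$p$ pieces are $p$-adic analytic. The torsion conclusion for $G$ then follows from linearity-type arguments. None of this machinery appears in your proposal, and there is no known way to bypass it with the direct approach you sketch.
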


\begin{theorem}\label{engel}
Let $q$ be a prime and $A$ be an elementary abelian $q$-group of order at least $q^2$ acting  coprimely on a profinite group $G$. Assume that  all elements in $C_{G}(a)$ are Engel in $G$ for each $a\in A^{\#}$. Then $G$ is locally nilpotent.
\end{theorem}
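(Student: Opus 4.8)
The plan is to translate the statement into a question about Lie algebras and then apply Zelmanov's nilpotency criterion. Since local nilpotency is detected on finitely generated subgroups, and every finitely generated subgroup of $G$ sits inside a finitely generated $A$-invariant one (generated by the finite $A$-orbit of a generating set), I may assume from the outset that $G$ is finitely generated and $A$-invariant. The hypothesis passes to such a subgroup, because an element that is Engel in $G$ is Engel in every subgroup containing it, so that the elements of $C_H(a)=H\cap C_G(a)$ remain Engel in $H$. By the Wilson--Zelmanov theorem \cite{WZ} it is then enough to show that this $G$ is nilpotent; I will also use the standard fact that, for the noncyclic coprime action of $A$, one has $G=\langle C_G(a):a\in A^{\#}\rangle$.

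Next I would attach to $G$ its associated graded Lie algebra. For each prime $p$ not dividing $|A|$ consider the Lie algebra $L=L_p(G)$ coming from a suitable central (Zassenhaus--Jennings--Lazard) filtration of the relevant pro-$p$ quotients. The group $A$ acts on $L$ by automorphisms, and, the action being coprime, after extending the ground field to contain a primitive $q$-th root of unity one obtains the eigenspace decomposition $L=\bigoplus_{\lambda\in\widehat{A}}L_\lambda$, where $\widehat{A}$ is the (elementary abelian) dual group and $C_L(a)=\bigoplus_{\lambda(a)=1}L_\lambda$. Because $G$ is finitely generated, $L$ is generated by finitely many homogeneous elements.

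Here the hypothesis $|A|\geq q^2$ enters decisively. For every character $\lambda\in\widehat{A}$ the kernel $\ker\lambda$ has order at least $q$, hence contains a nontrivial element $a$, and for this $a$ one has $L_\lambda\subseteq C_L(a)$; thus each homogeneous component lies in the centralizer of some $a\in A^{\#}$. Translating the group condition into the Lie algebra through the Wilson--Zelmanov machinery, the image in $L$ of an element of $C_G(a)$ that is Engel in $G$ is ad-nilpotent in $L$. Consequently every homogeneous generator of $L$ is ad-nilpotent.

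It then remains to deduce that $L$ is nilpotent, for which I would invoke Zelmanov's theorem \cite{Z0,Z1} that a Lie algebra satisfying a polynomial identity and generated by finitely many ad-nilpotent elements is nilpotent. Producing the polynomial identity is the heart of the matter and the step I expect to be the main obstacle. I would obtain it by observing that $C_G(A)=\bigcap_{a}C_G(a)$ is an Engel group, hence locally nilpotent by \cite{WZ}, which forces the fixed-point subalgebra $C_L(A)=L_0$ to satisfy an identity, and then transferring that identity from the null component $L_0$ to all of $L$ via the coprime action, using a result of Bahturin--Zaicev type on $A$-graded Lie algebras whose null component is PI. With the identity available, Zelmanov's criterion gives that $L$ is nilpotent of class bounded in terms of $q$ and the number of generators; the Lazard correspondence returns a uniform bound on the class of the finite continuous quotients of $G$, and a standard inverse-limit argument recovers the nilpotency of $G$. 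Throughout, the two delicate points are the faithful passage from the group Engel condition to ad-nilpotency in $L$ and the production of the polynomial identity from the fixed points, both of which rely on the coprimeness and on the rank of $A$ being at least two.
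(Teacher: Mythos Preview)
A framing point first: Theorem~\ref{engel} is not proved in this paper---it is quoted from \cite[Theorem~B2]{CPD}. However, the paper proves a strict generalisation (Theorem~\ref{main}), and the pro-$p$ core of that argument (Proposition~\ref{casepro-p}) is exactly the Lie-theoretic route you sketch: Bahturin--Zaicev to propagate a PI from $C_L(A)$ to $L$, eigenspace decomposition after extending scalars, ad-nilpotency of common eigenvectors via Lemma~\ref{leWZ}, and Zelmanov's Theorem~\ref{Z1992}. So at that level your plan matches the paper. (One refinement you elide: Theorem~\ref{Z1992} requires ad-nilpotency of all \emph{commutators} in the generators, not just the generators; and after extending scalars an eigenvector is an $\mathbb{F}_p[\omega]$-linear combination of images of group elements, so its ad-nilpotency needs the auxiliary argument via Lemma~\ref{lemmanovo} that the paper carries out.)

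There are two genuine gaps. First, you never reduce to a pro-$p$ group: $L_p(G)$ and the Lazard criterion only control the pro-$p$ part of $G$, and for a general profinite $G$ one must first pass to the pronilpotent case and then to each Sylow pro-$p$ subgroup; in the paper this reduction occupies most of the proof of Theorem~\ref{main}. Second, and more seriously, your final step is wrong. Nilpotency of $L_p(G)$ does \emph{not} yield ``a uniform bound on the class of the finite continuous quotients of $G$''; by Lazard it is equivalent only to $G$ being $p$-adic analytic, and there are non-nilpotent $p$-adic analytic pro-$p$ groups (e.g.\ congruence subgroups of $\mathrm{SL}_2(\mathbb{Z}_p)$). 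The missing ingredient, which the paper supplies, is Gruenberg's theorem \cite{G} that in a linear group the Engel elements form the Hirsch--Plotkin radical: since $G=\langle C_G(a):a\in A^{\#}\rangle$ is generated by Engel elements, linearity over $\mathbb{Q}_p$ together with Gruenberg forces $G$ itself to be nilpotent. Without this extra step your argument stops short of the conclusion.
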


Here and throughout the paper $A^{\#}$ denotes the set of nontrivial elements of $A$. The proofs of the above results involve a number of deep ideas. In particular, Lie theoretical results of Zelmanov \cite{Z0,Z1,zenew} obtained in his solution of the restricted Burnside problem are combined with a criteria for a pro-$p$ group to be $p$-adic analytic in terms of the associated Lie algebra due to Lazard \cite{L}, and 
a theorem of Bahturin and Zaicev \cite{BZ} on Lie algebras admitting a soluble group of automorphisms whose fixed-point subalgebra satisfies a polynomial identity. Moreover Theorems \ref{periodic} and \ref{engel} rely heavily on
Zelmanov's theorem about  local finiteness of torsion profinite groups and on  the Wilson-Zelmanov result on local nilpotency of Engel profinite groups, respectively.

In the present paper  we consider  profinite groups admitting an action by an elementary abelian group under which the centralizers of automorphisms  satisfy the property that some power of any element is Engel.  Our first goal is to establish the following result.

\begin{theorem}\label{main}
Let $q$ be a prime and $A$ an elementary abelian group of order $q^2$. Suppose that $A$ acts  coprimely on a profinite group $G$ and assume that some power of each element in $C_G(a)$ is Engel in $G$ for any $a\in A^{\#}$. Then $G$ is locally virtually nilpotent. 
\end{theorem}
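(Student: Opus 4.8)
The plan is to adapt the proof of Theorem \ref{engel}, replacing the hypothesis \emph{Engel} by \emph{some power is Engel} and correspondingly weakening the conclusion from \emph{locally nilpotent} to \emph{locally virtually nilpotent}, along exactly the lines by which \cite{BS} upgrades the Wilson--Zelmanov theorem. First I would reduce to the case that $G$ is topologically finitely generated and $A$-invariant: a subgroup of a virtually nilpotent group is virtually nilpotent, $A$ is finite, and enlarging a finite generating set by its $A$-orbit produces a finitely generated $A$-invariant subgroup. Observe that each centralizer $C_G(a)$ inherits the hypothesis, since an element that is Engel in $G$ is a fortiori Engel in $C_G(a)$; hence by \cite[Theorem 1.1]{BS} every $C_G(a)$ is already locally virtually nilpotent. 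The substance of the theorem, however, is to transfer this good behaviour from the centralizers to $G$, and for this I would reduce, as in the proof of Theorem \ref{engel}, to the analysis of finitely generated pro-$p$ sections of $G$, where $p \neq q$ by coprimality.

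The heart of the matter is Lie-theoretic. For such a pro-$p$ section $P$ I would form the associated restricted Lie algebra $L = L(P)$ over $\mathbb{F}_p$, generated by the images of a finite generating set. The coprime action of $A$ induces an action on $L$, and, after extending the field of scalars so that the action is diagonalizable, $L$ is the sum of the fixed-point subalgebras $C_L(a)$ with $a \in A^{\#}$. The crucial translation is that the power-Engel hypothesis makes the pertinent elements of $C_L(a)$ ad-nilpotent: if $x^{p^{k}}$ is Engel in $G$ then its image is ad-nilpotent, and since the adjoint map of a $p^{k}$-th power is the $p^{k}$-th power of the adjoint map, the image of $x$ is itself ad-nilpotent. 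Thus each $C_L(a)$ is an Engel Lie algebra, and the Lie-theoretic results of Zelmanov \cite{Z0,Z1,zenew} then yield that the fixed-point subalgebra $C_L(A) \leq C_L(a)$ satisfies a polynomial identity.

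Now the theorem of Bahturin--Zaicev \cite{BZ} applies: since the fixed-point subalgebra $C_L(A)$ is PI, so is $L$ itself. Being a PI Lie algebra generated by finitely many ad-nilpotent elements, $L$ is nilpotent by Zelmanov, hence finite-dimensional; by Lazard's criterion \cite{L} this means that $P$ is $p$-adic analytic. The Engel condition then forces the associated $\mathbb{Q}_p$-Lie algebra to be nilpotent, so $P$ admits an open nilpotent subgroup. Reassembling this control over the pro-$p$ sections shows that the finitely generated group $G$ has an open nilpotent subgroup, that is, $G$ is virtually nilpotent, as required.

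The principal difficulties, as in \cite{BS}, are twofold: extracting from the qualitative, unbounded power-Engel hypothesis a genuine (bounded) polynomial identity on $C_L(A)$ that the Bahturin--Zaicev transfer and Zelmanov's structure theorems can consume, and then passing from $p$-adic analyticity back to an open nilpotent subgroup of the group. The latter is exactly the point at which the conclusion is forced to weaken from \emph{nilpotent} to \emph{virtually nilpotent}, in parallel with the passage from Wilson--Zelmanov to \cite{BS}.
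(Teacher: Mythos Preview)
Your outline has the right shape, but there are genuine gaps at each of the critical junctures.

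The most serious is the PI step. You assert that because the homogeneous elements of $C_L(a)$ are ad-nilpotent, ``the Lie-theoretic results of Zelmanov'' yield a polynomial identity on $C_L(A)$. This does not follow: ad-nilpotency with unbounded index is not an identity, and nothing in the hypothesis supplies a bound. The paper obtains the PI by an entirely different, group-level mechanism (Lemma~\ref{lpi}): a Baire category argument on $C_G(A)\times C_G(A)$ produces an open subgroup and cosets on which a \emph{fixed} law $[x,{}_n y^d]\equiv 1$ holds, and then the Wilson--Zelmanov coset-identity theorem (Theorem~\ref{WZ-cosets}) converts this into a PI on $C_L(A)$; only then is Bahturin--Zaicev invoked. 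Closely related is your appeal to Zelmanov's nilpotency criterion: Theorem~\ref{Z1992} requires that every \emph{commutator} in the generators be ad-nilpotent, not merely the generators themselves. The paper secures this by extending scalars to $\mathbb{F}_p[\omega]$, choosing common eigenvectors for $A$ as generators, and arguing that any commutator in them again lies in some $\overline{L}_{ij}$, where ad-nilpotency is established via an auxiliary application of Theorem~\ref{Z1992} together with Lemma~\ref{lemmanovo}.

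Your reduction to pro-$p$ and your endgame are likewise too loose. The paper does not simply pass to pro-$p$ sections. For the general case it lets $K$ be the closure of the subgroup generated by all Engel elements of $G$, observes that each $C_{G/K}(a)$ is torsion, and applies Theorem~\ref{periodic} to get $G/K$ finite and Baer's theorem to get $K$ pronilpotent; then, inside a pronilpotent group, a second Baire category argument disposes of all but finitely many primes at once (via \cite[Theorem~1.2]{shusa} and \cite[Theorem~5]{WZ}), leaving finitely many Sylow subgroups for the pro-$p$ case. Finally, once a finitely generated pro-$p$ group is shown to be $p$-adic analytic, there is no ``Engel condition on the $\mathbb{Q}_p$-Lie algebra'' available, since the group is not Engel. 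Instead the paper works from the outset inside the open subgroup $H$ generated by Engel elements (again obtained via Theorem~\ref{periodic}), realizes $H$ as a linear group over $\mathbb{Q}_p$ via Lazard, and invokes Gruenberg's theorem that in a linear group the set of Engel elements coincides with the Hirsch--Plotkin radical, whence $H$ is nilpotent.
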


Using  Theorem \ref{main} in combination with the positive solution of the restricted Burnside problem \cite{Z0,Z1,zenew} the following  quantitative result for finite groups can be obtained. 
\begin{corollary}\label{coro1}
	Let $m, d$ be integers, $q$  a prime and $A$ an elementary abelian group of order $q^2$. Suppose that $A$ acts coprimely on a $m$-generated finite group $G$ and assume that all $d$-th powers of elements in $C_{G}(a)$ are $n$-Engel in $G$ for each $a\in A^{\#}$. Then there exist positive integers $e$ and $c$, depending only on $m,n,q$ and $d$, such that $G$ has a normal subgroup $N$ with nilpotency class at most $c$ and $|G/N|$ is at most $e$. 
\end{corollary}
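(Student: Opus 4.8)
The plan is to deduce the corollary from Theorem \ref{main} by a standard inverse-limit (compactness) argument, the positive solution of the restricted Burnside problem entering through Theorem \ref{main} and in ensuring that the resulting bounds are genuinely uniform. Fix $m,n,q,d$ and suppose, for contradiction, that no pair $(e,c)$ with the required properties exists. Then for every positive integer $i$ we may choose a finite group $G_i$, generated by $m$ elements, admitting a coprime action of the elementary abelian group $A$ of order $q^2$, such that all $d$-th powers of elements of $C_{G_i}(a)$ are $n$-Engel in $G_i$ for each $a\in A^{\#}$, but such that $G_i$ possesses no normal subgroup of nilpotency class at most $i$ and index at most $i$.

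Next I would assemble these counterexamples into one finitely generated profinite group. For each $i$ form the semidirect product $H_i$ of $G_i$ by $A$ determined by the given action, and set $P=\prod_i H_i$, a profinite group. The diagonal map $a\mapsto (a,a,\dots)$ embeds $A$ into $P$ and normalizes the closed subgroup $\prod_i G_i$, acting on it coordinatewise by the respective actions. Fixing generators $x_{i,1},\dots,x_{i,m}$ of each $G_i$, put $g_j=(x_{i,j})_i$ for $j=1,\dots,m$, and let $G$ be the closed subgroup of $\prod_i G_i$ generated by the $A$-orbit $\{g_j^{\,a}: 1\le j\le m,\ a\in A\}$. By construction $G$ is topologically finitely generated and invariant under the diagonal copy of $A$; since each $|G_i|$ is coprime to $q$, the induced action of $A$ on $G$ is coprime. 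Moreover each coordinate projection $\pi_i\colon G\to G_i$ is surjective, because it carries the chosen generators onto a generating set of the finite group $G_i$.

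I would then verify the hypothesis of Theorem \ref{main} for $G$. If $a\in A^{\#}$ (viewed diagonally) and $x=(x_i)_i\in C_G(a)$, then $x_i\in C_{G_i}(a)$ for every $i$, so by assumption each $x_i^{\,d}$ is $n$-Engel in $G_i$. Consequently, for every $y=(y_i)_i\in G$ one has $[y,_n x^{d}]=([y_i,_n x_i^{d}])_i=1$, so that $x^{d}$ is in fact uniformly $n$-Engel in $G$. Thus some power of each element of $C_G(a)$ is Engel in $G$ for every $a\in A^{\#}$, and Theorem \ref{main} applies: $G$ is locally virtually nilpotent. As $G$ is finitely generated, $G$ itself is virtually nilpotent, so (replacing an open nilpotent subgroup by its normal core if necessary) it has an open normal nilpotent subgroup $N$; let $c_0$ be its nilpotency class and $e_0=|G/N|$.

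Finally I would transfer these bounds downstairs. For each $i$ the image $\pi_i(N)$ is a normal subgroup of $G_i=\pi_i(G)$, it is nilpotent of class at most $c_0$ as a homomorphic image of $N$, and $[G_i:\pi_i(N)]\le [G:N]=e_0$. Choosing any $i>\max(c_0,e_0)$ then produces in $G_i$ a normal subgroup of class at most $i$ and index at most $i$, contradicting the choice of $G_i$. Hence the desired $e$ and $c$ exist and depend only on $m,n,q$ and $d$. I expect the main technical point to be the construction of the auxiliary group $G$: one must ensure simultaneously that it is finitely generated, that the diagonal $A$-action is coprime and leaves $G$ invariant, and that the coordinatewise Engel condition survives in the inverse limit; once this is arranged the reduction to Theorem \ref{main} is immediate.
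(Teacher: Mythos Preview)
Your argument is correct and follows the same compactness strategy as the paper: assume failure, build a profinite group from a sequence of counterexamples, apply Theorem~\ref{main}, and derive a contradiction by projecting back to the finite factors.

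The one noteworthy difference is in the last step. The paper, after obtaining an open normal nilpotent subgroup $K$ of index $r$ in the finitely generated subgroup $D$, passes to $G_{ij}^{\,r}$ (which is nilpotent of class at most $c$ as an image of $D^{\,r}\le K$) and then invokes the positive solution of the restricted Burnside problem to bound the index $[G_{ij}:G_{ij}^{\,r}]$ in terms of $m$ and $r$. You instead simply project $N$ itself: $\pi_i(N)$ is already a normal nilpotent subgroup of $G_i$ of class at most $c_0$ and index at most $e_0$, and this yields the contradiction directly. Your route is cleaner and shows that the separate appeal to the restricted Burnside problem in the paper's proof of this corollary is unnecessary; the deep input is entirely absorbed into Theorem~\ref{main}. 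The small extra care you take to make the finitely generated subgroup $A$-invariant (by closing under the $A$-orbit of the generators) is harmless and, as you note, is only needed so that Theorem~\ref{main} applies to $G$ itself rather than to the full Cartesian product.
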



If, in Theorem \ref{engel}, we relax the hypothesis that every element of $C_{G}(a)$ is Engel in $G$ and require instead that every element of $C_{G}(a)$ is Engel in $C_{G}(a)$, we  see that the result is no longer true. Indeed, an example of a finite non-nilpotent group $G$ admitting  an action of a non-cyclic group $A$ of order four  such that $C_G(a)$ is abelian for each $a\in A^{\#}$ can be found for instance in \cite{PS-CA3}. On the other hand, in \cite{PS-CA3}, the authors proved that if  $A$ is an elementary abelian $q$-group of order at least $q^3$, with $q$ a prime, acting coprimely on a profinite group $G$ in such a manner  that $C_{G}(a)$ is locally nilpotent for each $a\in A^{\#}$, then $G$ is locally nilpotent.
Another purpose of the present paper is to  establish the following related result.

\begin{theorem}\label{main2}
Let $q$ be a prime and $A$ an elementary abelian group of order $q^3$. Suppose that $A$ acts coprimely on a profinite group $G$ and assume that some power of each element in $C_G(a)$ is Engel in $C_G(a)$ for any $a\in A^{\#}$. Then $G$ is locally virtually nilpotent. 
\end{theorem}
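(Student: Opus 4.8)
The plan is to combine the Bastos--Shumyatsky theorem, applied internally to each centralizer, with the locally nilpotent analogue of \cite{PS-CA3}, the missing ``virtual'' information being supplied by the restricted Burnside problem. \emph{Reduction to the finitely generated case.} Since local virtual nilpotency is a property of finitely generated subgroups, and since for any finitely generated $H\le G$ the subgroup $\langle H^{A}\rangle$ is again finitely generated and $A$-invariant, I would first replace $G$ by $\langle H^A\rangle$ and so assume that $G$ is finitely generated and $A$-invariant; the hypothesis is inherited because $C_{\langle H^A\rangle}(a)\le C_G(a)$ and an element that is Engel in $C_G(a)$ is a fortiori Engel in the subgroup $C_{\langle H^A\rangle}(a)$. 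As an open subgroup of a finitely generated profinite group is again finitely generated, and a finitely generated locally nilpotent profinite group is nilpotent, it then suffices to produce an open nilpotent subgroup of $G$; that is, to show that $G$ is virtually nilpotent.

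\emph{The centralizers.} For a fixed $a\in A^{\#}$ the hypothesis says precisely that the profinite group $C_G(a)$ has the property that some power of each of its elements is Engel in $C_G(a)$. This is exactly the hypothesis of \cite[Theorem~1.1]{BS}, whence every $C_G(a)$ is locally virtually nilpotent; this is the role of imposing the Engel condition inside $C_G(a)$ rather than inside $G$. Thus Theorem~\ref{main2} is reduced to the following virtual version of the result of \cite{PS-CA3}: if $A$ of rank $3$ acts coprimely on the finitely generated group $G$ and every $C_G(a)$ is locally virtually nilpotent, then $G$ is virtually nilpotent.

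\emph{Uniformisation and the quantitative step.} To prove this I would first pass to uniform exponents. In each of the finitely many profinite groups $C_G(a)$ the closed sets $\{x: x^{d}\text{ is }n\text{-Engel in }C_G(a)\}$ cover $C_G(a)$, so by the Baire category theorem each contains a coset of an open subgroup; a routine argument then yields integers $d,n$ and an open $A$-invariant subgroup $H\le G$ such that, for every $a\in A^{\#}$, all $d$-th powers of elements of $C_H(a)$ are $n$-Engel in $C_H(a)$. I would then argue at the level of the finite $A$-invariant quotients $\bar H=H/K$, using the coprime identity $C_{\bar H}(a)=C_H(a)K/K$. In such a quotient every $d$-th power of an element of $C_{\bar H}(a)$ is a left Engel element and hence, by Baer's theorem, lies in the Fitting subgroup; consequently $C_{\bar H}(a)/\mathrm{Fit}(C_{\bar H}(a))$ has exponent dividing $d$. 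The restricted Burnside problem \cite{Z0,Z1,zenew}, together with a bound on the number of generators of the fixed-point subgroups under the coprime action, then bounds the order of this quotient, producing an $A$-invariant normal subgroup of $\bar H$ of bounded index whose centralizers are nilpotent; the quantitative form of \cite{PS-CA3} bounds its nilpotency class. Passing to the inverse limit over $K$ would yield a nilpotent open subgroup of $H$, and hence of $G$.

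\emph{Main obstacle.} The genuine difficulty is the ``virtual'' part. Local virtual nilpotency of the $C_G(a)$ does not supply open locally nilpotent subgroups --- a finite-exponent quotient may well be infinite --- so \cite{PS-CA3} cannot be invoked on $G$ or on an open subgroup directly. Everything therefore hinges on converting the element-by-element power and Engel data into the uniform exponents $d,n$ and, above all, on bounding the order of the ``finite part'' $C_{\bar H}(a)/\mathrm{Fit}(C_{\bar H}(a))$ uniformly by means of the restricted Burnside problem; this in turn requires controlling the number of generators of the fixed-point subgroups under the rank-$3$ coprime action, and then splicing these bounds together with \cite{PS-CA3} simultaneously over all $q^{3}-1$ automorphisms while preserving $A$-invariance.
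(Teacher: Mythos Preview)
Your outline has a genuine gap at exactly the point you flag as the ``main obstacle'': there is no bound on the number of generators of $C_{\bar H}(a)$ in terms of the number of generators of $H$. Coprime action does not control the generator number of the fixed-point subgroup, so even after your Baire uniformisation to exponents $d,n$, the quotient $C_{\bar H}(a)/\mathrm{Fit}(C_{\bar H}(a))$ is a group of exponent dividing $d$ whose generator number is unbounded as the finite quotient $\bar H$ varies; the restricted Burnside problem therefore gives you nothing. Without that bound the quantitative form of \cite{PS-CA3} cannot be invoked either, and the argument stalls.

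The paper's proof is organised precisely so as to manufacture this missing rank bound. It first reduces to the pronilpotent case (showing $G/F(G)$ is locally finite via \cite[Lemma~2.6]{Pavel-law} and Theorem~\ref{periodic}), then to the pro-$p$ case, and only there deploys the Lie machinery: Zelmanov's Theorem~\ref{Z1992} together with Bahturin--Zaicev shows $L_p(G)$ is nilpotent, whence by Lazard $G$ is $p$-adic analytic and so of finite rank. \emph{This} is what forces every $C_G(a)$ to be finitely generated, after which \cite[Theorem~1.1]{BS} upgrades ``locally virtually nilpotent'' to ``virtually nilpotent'' on the centralizers. The endgame is also different from yours: rather than restricted Burnside plus quantitative \cite{PS-CA3}, the paper passes through Mal'cev positive laws (Lemma~\ref{positivelaw}) and the Burns--Macedo{\'n}ska--Medvedev theorem to conclude. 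In short, the Lie-theoretic step is not optional decoration; it is the mechanism that supplies the generator control your approach assumes but cannot justify.
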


Our next result  is  an  analogue  of Corollary \ref{coro1}.
\begin{corollary}\label{coro2}
	Let $m, d$ be integers, $q$  a prime and $A$ an elementary abelian group of order $q^3$. Suppose that $A$ acts coprimely on a $m$-generated finite group $G$ and assume that all $d$-th powers of elements in $C_{G}(a)$ are $n$-Engel in $C_{G}(a)$ for each $a\in A^{\#}$. Then there exist positive integers $e$ and $c$, depending only on $m,n,q$ and $d$, such that $G$ has a normal subgroup $N$ with nilpotency class at most $c$ and $|G/N|$ is at most $e$. 
\end{corollary}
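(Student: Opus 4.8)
The plan is to derive Corollary \ref{coro2} from Theorem \ref{main2} by a standard inverse-limit compactness argument, converting the qualitative structural conclusion into the explicit bounds $e$ and $c$ depending only on $m,n,q,d$. This is the same mechanism that produces Corollary \ref{coro1} from Theorem \ref{main}, so I would state it in parallel form.

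First I would argue by contradiction. Suppose no such bounds $e=e(m,n,q,d)$ and $c=c(m,n,q,d)$ exist. Then for each pair $(e,c)$ one can find a finite group $G_{e,c}$, generated by $m$ elements, admitting a coprime action of an elementary abelian group $A$ of order $q^3$ such that every $d$-th power of an element of $C_{G_{e,c}}(a)$ is $n$-Engel in $C_{G_{e,c}}(a)$ for each $a\in A^{\#}$, yet $G_{e,c}$ has no normal subgroup $N$ of nilpotency class at most $c$ with index at most $e$. Letting $(e,c)$ range over $\mathbb{N}\times\mathbb{N}$ produces an infinite family of such finite groups. The key point is that the relevant data are uniformly bounded: the acting group $A\cong (\mathbb{Z}/q\mathbb{Z})^3$ is fixed, the number of generators $m$ is fixed, and the Engel degree $n$ together with the exponent $d$ are fixed. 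Because the groups are $m$-generated with a fixed acting group, one can pass to an inverse limit (equivalently, invoke a standard ultraproduct or König's-lemma type argument on the inverse system of the $m$-generator finite quotients) to build a profinite group $G$ generated by $m$ elements that inherits the coprime $A$-action and the hypothesis that some power (indeed the $d$-th power) of each element of $C_G(a)$ is $n$-Engel, hence Engel, in $C_G(a)$ for every $a\in A^{\#}$.

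By Theorem \ref{main2}, this limit group $G$ is locally virtually nilpotent. Since $G$ is $m$-generated, $G$ itself is virtually nilpotent: it possesses an open, hence finite-index, normal subgroup $N$ of some finite nilpotency class $c_0$, and $N$ has finite index $e_0=|G/N|$. The positive solution of the restricted Burnside problem \cite{Z0,Z1,zenew} is invoked here to guarantee that among $m$-generated groups of the relevant bounded type the nilpotency class and the index cannot run off to infinity without forcing the limit object to violate the virtual nilpotency just established; concretely, the finite quotients approximating $G$ eventually all admit a normal subgroup of class at most $c_0$ and index at most $e_0$, contradicting the choice of the $G_{e,c}$ for $(e,c)=(e_0,c_0)$.

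The main obstacle is making the inverse-limit construction genuinely uniform and ensuring the $A$-action and the centralizer-Engel hypothesis survive passage to the limit. One must verify that the coprimeness condition $(|A|,|G_{e,c}|)=1$ is preserved so that the standard coprime-action machinery (in particular $C_{G/N}(a)=C_G(a)N/N$ and related fixed-point relations) applies in the limit, and that the property ``the $d$-th power of every element of $C_G(a)$ is $n$-Engel in $C_G(a)$'' is a closed condition describable by the inverse system. Once this compactness bookkeeping is arranged, the contradiction with Theorem \ref{main2} is immediate, and the argument is formally identical in structure to the derivation of Corollary \ref{coro1}, with $q^3$ in place of $q^2$ and the Engel condition interpreted inside $C_G(a)$ rather than in $G$.
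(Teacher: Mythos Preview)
Your proposal is correct and follows essentially the same route as the paper: the authors explicitly state that the proof of Corollary \ref{coro2} is obtained from that of Corollary \ref{coro1} by replacing appeals to Theorem \ref{main} with appeals to Theorem \ref{main2}, and your outline mirrors that argument. The only cosmetic difference is that the paper realizes the limit object concretely as the closed $m$-generated diagonal subgroup of the Cartesian product of the counterexamples (so that every $G_{ij}$ is visibly a quotient), and then applies the restricted Burnside problem to bound $[G_{ij}:G_{ij}^r]$ rather than phrasing the RBP step as you do.
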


The paper is organized as follows. In Sections 2  we present the Lie-theoretic machinery that will be useful within the proofs.  Section 3 is devoted to proving Theorem \ref{main} and Corollary \ref{coro1},  and in the last section we establish  Theorem \ref{main2} and Corollary \ref{coro2}. 

The notation is standard. Throughout the paper we use, without special references, the well-known properties of coprime actions (see for example \cite[Lemma 3.2]{PSprofinite}). 
 
If $\alpha$ is a coprime automorphism of a profinite group $G$, then \linebreak $C_{G/N}(\alpha)=C_G(\alpha)N/N$ for any $\alpha$-invariant normal subgroup $N$.
 
If $A$ is a noncyclic elementary abelian group acting coprimely on a profinite group $G$, then $G$ is generated by the subgroups $C_G(B)$, where $A/B$ is cyclic.

 \section{Associated Lie algebras}
Let $L$ be a Lie algebra over a field $K$ and  $X$  a subset of $L$. By a commutator in elements of $X$ we mean any element of $L$ that can be obtained as a Lie product of elements of $X$ with some system of brackets. If $x_1,\ldots,x_k,x, y$ are elements of $L$, we define inductively 
$$[x_1]=x_1; [x_1,\ldots,x_k]=[[x_1,\ldots,x_{k-1}],x_k]$$
and 
$[x,_0y]=x; [x,_my]=[[x,_{m-1}y],y],$ for all positive integers $k,m$.  
As usual, we say that an element $a\in L$ is ad-nilpotent if there exists a positive integer $n$ such that $[x,_na]=0$ for all $x\in L$. If $n$ is the least integer with the above property, then we say that $a$ is ad-nilpotent of index $n$. Denote by $F$ the free Lie algebra over $K$ on countably many free generators $x_1,x_2,\ldots$. Let $f=f(x_1,x_2,\ldots,x_n)$ be a non-zero element of $F$. The algebra $L$ is said to satisfy the identity $f =0$ if $f(l_1,l_2,\ldots,l_n) = 0$ for any $l_1,l_2,\ldots,l_n\in L$. In this case we say also that $L$ satisfies a PI (polynomial identity) or that $L$ is a PI-algebra.

The next theorem represents the most general form of the Lie-theoretical part of the solution of the restricted Burnside problem. It was announced by Zelmanov in \cite{Z1}. A detailed proof can be found in \cite{zenew}.

\begin{theorem}\label{Z1992}
Let $L$ be a Lie algebra over a field and suppose that $L$ satisfies a PI. If $L$ can be generated by a finite set $X$ such that every commutator in elements of $X$ is ad-nilpotent, then $L$ is nilpotent.
\end{theorem}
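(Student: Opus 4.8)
The plan is to recognize that Theorem \ref{Z1992} is the Lie-theoretic core of Zelmanov's solution of the restricted Burnside problem, so that no short self-contained argument is available; instead I would organize the proof around the Kostrikin--Zelmanov theory of \emph{sandwich} elements. Recall that a nonzero $c\in L$ is a sandwich if $(\mathrm{ad}\,c)^2=0$ and $\mathrm{ad}\,c\,\mathrm{ad}\,x\,\mathrm{ad}\,c=0$ for all $x\in L$. The decisive black box is the theorem that a Lie algebra generated by finitely many sandwiches is nilpotent; granting it, the whole problem becomes the task of \emph{producing} sandwiches inside $L$.

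First I would reduce to a finitely generated algebra. Since nilpotency is witnessed on finite generating sets and the hypotheses pass to the subalgebra generated by any finite subset of $X$, it suffices to treat $L=\langle X\rangle$ with $X$ finite, every commutator in $X$ ad-nilpotent, and $L$ satisfying one fixed multilinear identity. I would then pass to the Lie algebra graded by the lower central series, so as to work with a graded PI-algebra whose homogeneous generators are ad-nilpotent; the grading makes the subsequent combinatorics homogeneous and hence manageable.

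The heart of the matter is to show that such an $L$, if nonzero, contains a nonzero sandwich, or equivalently a nonzero locally nilpotent ideal. Here I would bring in the two pillars of Zelmanov's machinery. On the combinatorial side, a Shirshov-type height theorem, available because of the PI, provides a spanning set of bounded-height commutators in the generators and thereby bounds the complexity of arbitrary elements. On the structural side, a linearization procedure converts an ad-nilpotent element of bounded index into a controlled family of operators, and the analysis of the Jordan-type structure attached to such elements allows one to manufacture a nonzero thin sandwich and then upgrade it to a genuine sandwich.

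The main obstacle is precisely this extraction of a sandwich from nothing more than a polynomial identity together with ad-nilpotency of the generators: this is the deepest and most technical component of Zelmanov's work, and it cannot be compressed into a routine calculation. Once a nonzero sandwich is in hand, one applies the sandwich nilpotency theorem to obtain a nonzero locally nilpotent ideal $I$, observes that $L/I$ inherits all the hypotheses, and iterates; a Noetherian argument, again powered by the PI and finite generation, guarantees that the process terminates and forces $L$ itself to be nilpotent.
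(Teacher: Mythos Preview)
The paper does not prove this theorem at all: it is quoted as a black box, with the remark that it ``was announced by Zelmanov in \cite{Z1}'' and that ``a detailed proof can be found in \cite{zenew}.'' So there is no ``paper's own proof'' to compare against; the authors use Theorem~\ref{Z1992} purely as an imported tool.

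Your proposal is not a proof but a high-level roadmap of Zelmanov's argument (PI plus ad-nilpotent generators $\Rightarrow$ existence of sandwiches $\Rightarrow$ nilpotent ideal $\Rightarrow$ induction). As a sketch of the strategy it is broadly accurate, and you are honest that the extraction of a sandwich is the deep, non-compressible step. But as a \emph{proof} it has no content beyond naming the ingredients: the Shirshov-height step, the linearization/Jordan analysis, and the termination argument are each asserted rather than carried out, and the final ``Noetherian argument'' is vague (finitely generated PI Lie algebras are not Noetherian in general; the actual termination in Zelmanov's work comes from different considerations). For the purposes of this paper the correct move is exactly what the authors do: state the theorem and cite \cite{Z1,zenew}.
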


An important criterion for a Lie algebra to satisfy a PI is provided by the next theorem, which was proved by Bahturin and Zaicev for soluble groups of automorphisms \cite{BZ} and later extended by Linchenko to the general case \cite{l}. 
\begin{theorem}\label{blz}
Let $L$ be a Lie algebra over a field $K$. Assume that a finite group $A$ acts on $L$ by automorphisms in such a manner that $C_L(A)$ satisfies a PI. Assume further that the characteristic of $K$ is either $0$ or prime to the order of $A$. Then $L$ satisfies a PI.
\end{theorem}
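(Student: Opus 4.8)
# Proof Proposal for Theorem \ref{blz}

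The plan is to establish a polynomial identity for $L$ by exploiting the coprimality hypothesis, which guarantees that $L$ decomposes nicely under the $A$-action and that $C_L(A)$ controls enough of the algebra. First I would reduce to the case where $K$ is algebraically closed: extending scalars to the algebraic closure $\bar K$ preserves both the $A$-action and the property of satisfying a PI (an identity holds over $K$ if and only if it holds over $\bar K$, since the identities form a linear condition), and $C_{L\otimes_K\bar K}(A) = C_L(A)\otimes_K\bar K$ still satisfies the same PI. Next, since $\mathrm{char}\,K$ is zero or coprime to $|A|$, the group algebra $\bar K[A]$ is semisimple by Maschke's theorem. The key structural input is to pass to an abelian quotient of $A$ on which the representation theory is transparent: over an algebraically closed field the irreducible $\bar K$-representations of a finite abelian group are one-dimensional, given by characters.

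The heart of the argument is the \emph{grading} (or near-grading) induced by $A$. After possibly replacing $A$ by a suitable quotient, I would decompose $L$ as a direct sum of eigenspaces (weight spaces) $L = \bigoplus_{\chi} L_\chi$ indexed by the characters $\chi$ of the abelianization of $A$, where $L_\chi = \{x \in L : a\cdot x = \chi(a)x \text{ for all } a\in A\}$. The semisimplicity of $\bar K[A]$ ensures this decomposition is genuine, and the bracket respects it: $[L_\chi, L_\psi] \subseteq L_{\chi\psi}$, so $L$ becomes graded by the (finite) character group $\hat A$. The trivial component $L_1$ is precisely $C_L(A)$, which satisfies a PI by hypothesis. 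The main task is then to propagate this identity from the identity component $L_1 = C_L(A)$ to the whole graded algebra $L$.

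The technical engine for this propagation is the following observation: because the grading group $\hat A$ is finite of order $n = |A|$, any long enough product of homogeneous elements must contain a sub-product lying in the identity component $L_1$, by a pigeonhole argument on the partial products of the grading degrees. Concretely, given any PI $f = 0$ satisfied by $C_L(A)$, one constructs a \emph{multilinearized} identity and then symmetrizes or averages over the action of $\hat A$ to manufacture a new identity valid on all homogeneous elements simultaneously, hence on all of $L$ by multilinearity. The construction uses that for each character $\chi$, the operator $e_\chi = \frac{1}{n}\sum_{a\in A}\chi(a)^{-1}a$ is a projection onto $L_\chi$, allowing one to express the components of an arbitrary element and to force products into $C_L(A)$ where the original identity can be applied.

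The hard part will be controlling the \emph{degree} of the resulting identity and verifying that the averaging procedure yields a genuinely nonzero element of the free Lie algebra $F$, rather than collapsing to the trivial identity. One must check that the multilinearization of $f$, when combined with the projections $e_\chi$, produces a nontrivial Lie polynomial identity on $L$; this is exactly the delicate combinatorial point that Bahturin and Zaicev resolved in the graded setting and that Linchenko extended to arbitrary finite $A$ by handling the non-fully-reducible pieces of the action. I would therefore expect the bookkeeping of which homogeneous components contribute, and the proof that the symmetrized identity does not vanish identically in $F$, to be the principal obstacle; the reduction to algebraically closed fields and the weight-space decomposition are comparatively routine once semisimplicity of $\bar K[A]$ is in hand.
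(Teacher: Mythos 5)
You should first be aware that the paper contains no proof of Theorem \ref{blz}: it is imported as a known result, due to Bahturin and Zaicev \cite{BZ} for soluble $A$ and to Linchenko \cite{l} in general, so your attempt can only be measured against those published arguments. For \emph{abelian} $A$ your outline is essentially the correct one: extend scalars so that $K[A]$ is split semisimple (coprimality plus Maschke), decompose $L=\bigoplus_{\chi}L_{\chi}$ into weight spaces indexed by the characters of $A$ itself, note $[L_{\chi},L_{\psi}]\subseteq L_{\chi\psi}$ and $L_{1}=C_{L}(A)$, and then invoke the graded-algebra theorem of \cite{BZ}: a Lie algebra graded by a finite group whose identity component satisfies a PI is itself PI, proved via the pigeonhole on partial degrees of left-normed products together with multilinearization. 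Since you explicitly defer that combinatorial core (the non-vanishing of the manufactured identity) to Bahturin--Zaicev, what you have is a roadmap rather than a proof, but in the abelian case it is the right roadmap.

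The genuine gap is your treatment of nonabelian $A$. The step ``after possibly replacing $A$ by a suitable quotient'' does not work: the action of $A$ on $L$ need not factor through the abelianization $A/A'$, so no action of $A/A'$ on $L$ is defined and there is no grading of $L$ by characters of $A/A'$; moreover the trivial weight space of any such putative quotient action would not be $C_{L}(A)$. The actual reduction in \cite{BZ} for soluble $A$ is an induction on derived length: $A$ normalizes $A'$, hence $A/A'$ acts on the subalgebra $C_{L}(A')$ with fixed points $C_{C_{L}(A')}(A/A')=C_{L}(A)$; the abelian case then shows $C_{L}(A')$ satisfies a PI, and induction applied to the action of $A'$ on $L$ finishes. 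Your sketch never identifies this step, and for arbitrary finite $A$ even this induction collapses (take $A$ perfect), which is precisely why Linchenko's extension \cite{l} proceeds by different means (identities of algebras with Hopf algebra actions). Your characterization of that extension as ``handling the non-fully-reducible pieces of the action'' is also off the mark: under the coprimality hypothesis the action \emph{is} completely reducible by Maschke, and the obstacle Linchenko overcomes is the noncommutativity of $A$, not any failure of semisimplicity. As written, your argument covers (modulo the cited combinatorial engine) only the abelian case of the stated theorem.
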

We use the centralizer notation for the fixed point subalgebra $C_L(A)$ of a group of automorphisms $A$ of $L$.

Another useful result, whose proof can be found in \cite[Lemma 5]{KS}, is the following. 

\begin{lemma}\label{lemmanovo}
Let $L$ be a Lie algebra and $H$ a subalgebra of $L$ generated by $m$ elements $h_{1},\ldots, h_{m}$ such that all commutators in the generators $ h_i$ are  ad-nilpotent in $L$. If $H$ is nilpotent, then  we have  $[L,\underbrace{H,\ldots, H}_u]=0$ for some  number $u$.
\end{lemma}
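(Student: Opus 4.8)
The plan is to recast the statement as a nilpotency assertion about an associative operator algebra and to argue by induction on the nilpotency class of $H$. It is cleanest to prove the slightly more flexible module version: if $H$ is a nilpotent Lie algebra generated by finitely many elements $h_1,\dots,h_m$ and $\rho\colon H\to \mathrm{End}(V)$ is a representation such that $\rho(c)$ is a nilpotent operator for every commutator $c$ in the $h_i$, then the associative subalgebra of $\mathrm{End}(V)$ generated by $\rho(H)$ is nilpotent. The lemma is the case $V=L$, $\rho=\mathrm{ad}_L$. Indeed, writing $\mathrm{ad}=\mathrm{ad}_L$ and letting $\mathcal{A}\subseteq\mathrm{End}(L)$ be the associative algebra generated by $\mathrm{ad}(H)$, nilpotency of $H$ of class $c$ means $H$ is spanned by the finitely many commutators $c_1,\dots,c_s$ in the $h_i$ of length at most $c$; all of these are ad-nilpotent by hypothesis, and $\mathrm{ad}(H)=\mathrm{span}\{\mathrm{ad}(c_j)\}$, so $\mathcal{A}$ is generated by finitely many nilpotent operators. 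If I show $\mathcal{A}^{u}=0$, then every left-normed product $[x,y_1,\dots,y_u]=x\,\mathrm{ad}(y_1)\cdots\mathrm{ad}(y_u)$ with $x\in L$ and $y_i\in H$ lies in $L\,\mathcal{A}^{u}=0$, which is exactly $[L,\underbrace{H,\dots,H}_u]=0$.

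I would then prove nilpotency of $\mathcal{A}$ by induction on $c$. If $c=1$, then $H$ is abelian, the operators $\mathrm{ad}(h_i)$ commute and are nilpotent, and the associative algebra generated by finitely many commuting nilpotent operators is nilpotent; this settles the base case. For $c>1$, set $Z=\gamma_c(H)$. Then $Z$ is central in $H$ and is spanned by finitely many length-$c$ commutators, each ad-nilpotent, and $[Z,Z]=0$ forces the operators $\mathrm{ad}(z)$, $z\in Z$, to commute; hence the associative algebra $\mathcal{B}$ generated by $\mathrm{ad}(Z)$ is nilpotent, say $\mathcal{B}^{b}=0$. Moreover $[Z,H]\subseteq\gamma_{c+1}(H)=0$ shows that each $\mathrm{ad}(z)$ commutes with every generator $\mathrm{ad}(c_j)$ of $\mathcal{A}$, so $\mathcal{B}$ lies in the centre of $\mathcal{A}$.

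With this central nilpotent subalgebra in hand I would filter $L$ by the subspaces $L_k=L\,\mathcal{B}^{k}$, so that $L_0=L$ and $L_b=0$. Centrality of $\mathcal{B}$ gives $L_k\,a=L\,\mathcal{B}^{k}a=L\,a\,\mathcal{B}^{k}\subseteq L_k$ for all $a\in\mathcal{A}$, so each $L_k$ is $\mathcal{A}$-invariant, and $L_k\,\mathrm{ad}(z)\subseteq L\,\mathcal{B}^{k+1}=L_{k+1}$ for $z\in Z$, so every element of $\mathrm{ad}(Z)$ acts trivially on the quotient $L_k/L_{k+1}$. Hence the action of $H$ on $L_k/L_{k+1}$ factors through $H/Z$, which is nilpotent of class at most $c-1$, while the induced operators of the commutators $c_j$ remain nilpotent (a quotient of a nilpotent operator is nilpotent). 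By the induction hypothesis applied to this induced representation, the associative algebra generated by the action of $H$ on $L_k/L_{k+1}$ is nilpotent; choosing $u'$ larger than all finitely many resulting indices gives $L_k\,\mathcal{A}^{u'}\subseteq L_{k+1}$ for every $k$, whence $\mathcal{A}^{u'b}$ annihilates $L$ and $\mathcal{A}$ is nilpotent.

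The one point demanding care — and the reason a direct appeal to Engel's theorem for $\mathrm{ad}(H)$ is not available — is that a linear combination of ad-nilpotent elements of $H$ need not be ad-nilpotent, so $\mathrm{ad}(H)$ is not a priori a Lie algebra of nilpotent operators. This is exactly what the centrality of $\gamma_c(H)$ repairs: it makes $\mathrm{ad}(\gamma_c(H))$ a commuting family of nilpotent operators, hence supplies the central nilpotent ideal $\mathcal{B}$ that lets me peel off the top of the lower central series and reduce the class. I expect this reduction to be the crux; the remaining steps (the abelian base case and the bookkeeping of the $\mathcal{B}$-adic filtration) are routine once the centrality of $\gamma_c(H)$ is exploited.
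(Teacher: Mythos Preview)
Your argument is correct. Note that the paper does not actually give its own proof of this lemma; it simply cites \cite[Lemma~5]{KS} (Khukhro--Shumyatsky). The proof there takes a different, more combinatorial route: one observes that the finitely generated nilpotent algebra $H$ is finite-dimensional and chooses a basis $e_1,\dots,e_s$ consisting of commutators in the $h_i$, ordered compatibly with the lower central series, so that $[e_i,e_j]$ is a combination of basis vectors of strictly higher weight. A PBW-type collecting process then rewrites any product $\mathrm{ad}(y_1)\cdots\mathrm{ad}(y_N)$ with $y_j\in H$ as a combination of ordered monomials $\mathrm{ad}(e_1)^{a_1}\cdots\mathrm{ad}(e_s)^{a_s}$ of total weight at least $N$; once $N$ is large enough, pigeonhole forces some exponent $a_i$ to exceed the ad-nilpotency index of $e_i$, and the monomial dies. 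Your induction on the nilpotency class, peeling off $\gamma_c(H)$ to produce a central nilpotent ideal $\mathcal{B}\subseteq\mathcal{A}$ and passing to the $\mathcal{B}$-adic subquotients, is more structural and avoids the straightening bookkeeping; the Khukhro--Shumyatsky argument, on the other hand, yields an explicit bound for $u$ directly in terms of the class of $H$ and the ad-nilpotency indices of the basis commutators. Either is adequate here, since no quantitative control on $u$ is required in the applications.
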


Let $G$ be a (profinite) group. A series of subgroups $$G=G_1\geq G_2\geq\dots\eqno{(*)}$$ is called an $N$-series if it satisfies $[G_i,G_j]\leq G_{i+j}$ for all $i,j\geq 1$. Here and throughout the paper when dealing with a profinite group we consider only closed subgroups. Obviously any $N$-series is central, i.e. $G_i/G_{i+1}\leq Z(G/G_{i+1})$ for any $i$.  Let $p$ be a prime. An $N$-series is called $N_p$-series if $G_i^p\leq G_{pi}$ for all $i$. Given an $N$-series $(*)$, let $L^*(G)$ be the direct sum of the abelian groups $L_i^*=G_i/G_{i+1}$, written additively. Commutation in $G$ induces a binary operation $[,]$ in $L^*(G)$. For homogeneous elements $xG_{i+1}\in L_i^*,yG_{j+1}\in L_j^*$ the operation is defined by $$[xG_{i+1},yG_{j+1}]=[x,y]G_{i+j+1}\in L_{i+j}^*$$ and extended to arbitrary elements of $L^*(G)$ by linearity. It is easy to check that the operation is well-defined and that $L^*(G)$ with the operations $+$ and $[,]$ is a Lie ring. If all quotients $G_i/G_{i+1}$ of an $N$-series $(*)$ have prime exponent $p$ then $L^*(G)$ can be viewed as a Lie algebra over $\mathbb F_p$, the field with $p$ elements. In the important  case where the series $(*)$ is the $p$-dimension central series (also known under the name of Zassenhaus-Jennings-Lazard series) of $G$ we write $D_i=D_i(G)$ for  the $i$-th term of the series of $G$, $L(G)$ for the corresponding associated Lie algebra over the field with $p$ elements  and  $L_p(G)$ for the subalgebra generated by the first homogeneous component $D_1/D_2$ in $L(G)$. Observe that the  $p$-dimension central series is an $N_p$-series (see \cite[p.\ 250]{Huppert2} for details).

Any automorphism of $G$ in the natural way induces an automorphism of $L^*(G)$. If $G$ is profinite and $\alpha$ is a coprime automorphism of $G$, then the subalgebra of fixed points of $\alpha$ in $L^*(G)$ is isomorphic to the Lie algebra associated with the group $C_G(\alpha)$ via the series formed by intersections of $C_G(\alpha)$ with the terms of the series $(*)$ (see \cite{aaaa} for more details).

Given  an $N_p$-series $(*)$ of $G$, let $x\in G$ and let $i=i(x)$ be the largest positive integer such that $x\in G_i$. We denote by $x^*$ the element $xG_{i+1}\in L^{*}(G)$.  We now quote some results providing sufficient conditions for $x^*$ to be ad-nilpotent. The first lemma is due to Lazard (see \cite[p.\ 131]{L2}).

\begin{lemma}\label{lazard-ad}
For any $x\in G$ we have $(adx^{*})^p=ad(x^p)^*$. In particular, if $x$ is of finite order $t$, then $x^*$ is ad-nilpotent of index at most $t.$
\end{lemma}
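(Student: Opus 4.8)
The plan is to derive the formula from a single congruence in the group that is valid for an arbitrary $N_p$-series, and then to read off the ``in particular'' clause by iteration. First I would fix homogeneous data: take $y\in G_j\setminus G_{j+1}$, so that $y^{*}=yG_{j+1}$ and, by the definition of the bracket on $L^{*}(G)$,
$$(ad\, x^{*})^{p}(y^{*})=[y,_p x]\,G_{j+pi+1},\qquad ad\,(x^{p})^{*}(y^{*})=[y,x^{p}]\,G_{j+pi+1},$$
the second identity being read in the component of degree $j+pi$, where $x^{p}$ naturally sits because $G_i^{p}\leq G_{pi}$. Thus part one reduces entirely to the group-commutator congruence
$$[y,x^{p}]\equiv [y,_p x]\pmod{G_{j+pi+1}}\qquad (y\in G_j,\ x\in G_i).$$

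To prove this congruence I would apply the Hall--Petrescu collection formula to write $[y,x^{p}]$ as a product $\prod_{k=1}^{p}[y,_k x]^{\binom{p}{k}}$ times correction commutators of higher weight, and then let the two defining properties of an $N_p$-series do the work. For $1\leq k\leq p-1$ the coefficient $\binom{p}{k}$ is divisible by $p$, so $[y,_k x]^{\binom{p}{k}}$ is a $p$-th power of an element of $G_{j+ki}$ and hence lies in $G_{p(j+ki)}\leq G_{j+pi+1}$ by $G_m^{p}\leq G_{pm}$; only the term $k=p$, namely $[y,_p x]$, survives modulo $G_{j+pi+1}$. The correction terms either carry an exponent divisible by $p$, and are absorbed into $G_{j+pi+1}$ in the same way, or involve $x$ with total weight exceeding $p$, hence lie in $G_{j+(p+1)i}\leq G_{j+pi+1}$ since $i\geq 1$. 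Conceptually this is just the Frobenius identity in characteristic $p$: under the Magnus embedding into $\mathbb F_p\langle\langle X,Y\rangle\rangle$ one has $(1+X)^{p}=1+X^{p}$ and $ad_{X^{p}}=(ad_X)^{p}$, the latter because $ad_X=L_X-R_X$ with $L_X,R_X$ commuting. This is the structural reason the congruence holds, and it is exactly the statement that $L^{*}(G)$ is a restricted Lie algebra whose $p$-operation is induced by $p$-th powers in $G$.

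For the ``in particular'' clause I would iterate part one to obtain $(ad\, x^{*})^{p^{k}}=ad\,(x^{p^{k}})^{*}$ for every $k$. Writing $t=p^{a}m$ with $(m,p)=1$, the element $z=x^{p^{a}}$ has order $m$ coprime to $p$; since $g\mapsto g^{p}$ is an automorphism of $\langle z\rangle$, $z$ is a $p^{k}$-th power in $\langle z\rangle$ for every $k$, and then $G_m^{p^{k}}\leq G_{p^{k}m}$ forces $z\in\bigcap_m G_m$, so $z^{*}=0$ and $ad\, z^{*}=0$. Hence $(ad\, x^{*})^{p^{a}}=0$, and $x^{*}$ is ad-nilpotent of index at most $p^{a}\leq t$; when $t$ is itself a power of $p$ this is immediate, as then $x^{p^{a}}=1$.

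The main obstacle is the bookkeeping inside the collection formula over a general $N_p$-series: one must guarantee that every correction term produced by collection, including those in which $y$ occurs more than once, is pushed into $G_{j+pi+1}$ using only $[G_a,G_b]\leq G_{a+b}$, $G_m^{p}\leq G_{pm}$, and the divisibility of the inner binomial coefficients by $p$. The Magnus--Frobenius picture shows a priori that the surviving term is precisely $[y,_p x]$, so the truth of the congruence is never in doubt; the genuine work lies entirely in verifying that no low-degree correction escapes, which is the one point that has to be carried out by hand rather than quoted.
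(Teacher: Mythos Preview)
The paper does not actually prove this lemma; it only attributes the result to Lazard with a page reference. Your proposal supplies essentially Lazard's original argument: the Hall--Petrescu collection formula, read modulo the terms of an $N_p$-series, gives $[y,x^p]\equiv[y,_p x]\pmod{G_{j+pi+1}}$, which is precisely the statement that the $p$-th power map induces the restricted ($p$-)structure on $L^*(G)$; the ad-nilpotence then follows by iteration. So your approach is correct and is the standard one the citation points to.

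Two small points worth tightening. First, with the paper's literal definition of $(\,\cdot\,)^*$ (image in $G_k/G_{k+1}$ for the \emph{largest} $k$ with the element in $G_k$), the symbol $(x^p)^*$ need not sit in degree $pi$ when $x^p\in G_{pi+1}$; the identity your congruence actually proves is $(ad\,x^*)^p=ad\bigl(x^pG_{pi+1}\bigr)$, the $p$-map of the restricted algebra, and that right-hand side may well be zero. This is the intended meaning and causes no trouble, but your line ``where $x^p$ naturally sits'' glosses over it. Second, your deduction that $z=x^{p^a}\in\bigcap_m G_m$ forces $z^*=0$ tacitly uses $\bigcap_m G_m=1$; this holds for the $p$-dimension central series of a pro-$p$ group (the only case used in the paper) but not for an arbitrary $N_p$-series. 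Neither issue affects the ``in particular'' clause as applied later.
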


The next result  essentially is due to Wilson and Zelmanov since it follows from the proof of  \cite[Lemma in Section 3]{WZ}.
\begin{lemma}\label{leWZ} Let $x$ be an Engel element of a profinite group $G$. Then $x^*$ is ad-nilpotent.
\end{lemma}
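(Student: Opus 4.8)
The plan is to work entirely inside the associated graded Lie algebra $L=L^{*}(G)$ attached to the given $N_p$-series and to translate the Engel condition on the group element $x$ into ad-nilpotency of $x^{*}$. First I would fix the weight: let $d$ be the integer with $x\in G_d\setminus G_{d+1}$, so that $x^{*}=xG_{d+1}$ is homogeneous of degree $d\geq 1$ and $\operatorname{ad}x^{*}$ is a homogeneous derivation of $L$ raising degrees by $d$. The basic dictionary is that for $y\in G_j$ the image of the group commutator $[y,_n x]$ in $G_{j+nd}/G_{j+nd+1}$ is exactly $[y^{*},_n x^{*}]$, and this vanishes precisely when $[y,_n x]\in G_{j+nd+1}$. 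A one-line induction using the defining inclusion $[G_a,G_b]\leq G_{a+b}$ of the $N$-series yields the crucial monotonicity: once $[y^{*},_m x^{*}]=0$ for some $m$, then $[y^{*},_n x^{*}]=0$ for every $n\geq m$, since $[[y,_m x],x]\in[G_{j+md+1},G_d]\leq G_{j+(m+1)d+1}$. In particular $[y,_n x]=1$ forces $[y^{*},_n x^{*}]=0$.

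Since $x$ is Engel, for every homogeneous $y^{*}$ there is some $n$ with $[y,_n x]=1$, so $\operatorname{ad}x^{*}$ is locally nilpotent on $L$. The next step is to make the nilpotency index uniform on each fixed homogeneous component. Restricting to $L_j=G_j/G_{j+1}$, which is finite-dimensional over $\mathbb{F}_p$ once $G$ is topologically finitely generated (to which case one reduces), the operators $(\operatorname{ad}x^{*})^{n}|_{L_j}$ kill every basis vector for all large $n$ by local nilpotency, and hence vanish identically for all large $n$. Thus for each $j$ there is an integer $N_j$ with $(\operatorname{ad}x^{*})^{N_j}L_j=0$.

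The heart of the matter, and the step I expect to be the main obstacle, is to pass from these component-wise bounds $N_j$ to a single global bound $N$ valid on all of $L$, which is exactly what ad-nilpotency demands. I would argue by contradiction: if $\operatorname{ad}x^{*}$ were not nilpotent, then for each $N$ there would be a homogeneous $y_N^{*}$ of some degree $j_N$ with $(\operatorname{ad}x^{*})^{N}y_N^{*}\neq 0$, and the monotonicity of the first paragraph would then force $(\operatorname{ad}x^{*})^{m}y_N^{*}\neq 0$ for every $m\leq N$. If the degrees $j_N$ stayed bounded, a pigeonhole argument would place infinitely many witnesses in a single finite-dimensional component $L_j$, directly contradicting the uniform vanishing $(\operatorname{ad}x^{*})^{N_j}L_j=0$ obtained above. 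The genuinely delicate case is that of unbounded degrees $j_N$, where a purely component-by-component analysis is insufficient: here one must exploit both that $L$ is generated by its first homogeneous component and the restricted ($p$-power) structure, through Lazard's identity $(\operatorname{ad}x^{*})^{p}=\operatorname{ad}(x^{p})^{*}$ supplied by Lemma \ref{lazard-ad}, in order to control the behaviour of $\operatorname{ad}x^{*}$ in arbitrarily high degrees. It is precisely this interplay between the grading, the $p$-power map, and the Engel condition that forms the technical core of the argument of Wilson and Zelmanov in \cite{WZ}, and on which the uniform bound, and hence the ad-nilpotency of $x^{*}$, ultimately rests.
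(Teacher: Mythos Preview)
The paper does not give its own proof of this lemma: it simply records that the statement follows from the proof of the lemma in Section~3 of Wilson--Zelmanov \cite{WZ}. So the relevant comparison is between your sketch and the Wilson--Zelmanov argument. Your dictionary between group commutators and Lie brackets, the monotonicity $[y^{*},_{m}x^{*}]=0\Rightarrow[y^{*},_{n}x^{*}]=0$ for $n\geq m$, and the conclusion that $\operatorname{ad}x^{*}$ is locally nilpotent on $L$ are all correct and are indeed the easy part of the story.

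Where your outline goes astray is in the proposed route from local to global nilpotency. The parenthetical reduction ``to which case one reduces'' to a finitely generated $G$ is not justified: ad-nilpotency demands a single index $N$ valid for every homogeneous $y^{*}$, and passing to $\overline{\langle x,y\rangle}$ yields an index that a priori depends on $y$. Even granting finite generation, the components $L_j$ of a \emph{general} $N_p$-series need not be finite over $\mathbb{F}_p$ (that is a feature of the $p$-dimension central series of a finitely generated pro-$p$ group, but the lemma is stated for an arbitrary $N_p$-series), so your pigeonhole step is unsupported. Likewise, generation of $L$ by $L_1$ is again special to $L_p(G)$, so the final appeal to that structure and to Lazard's identity does not fit the stated generality.

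The idea you are missing, and which is the actual engine in \cite{WZ}, is a Baire category step carried out in the \emph{group} rather than in the Lie algebra. The closed sets $S_n=\{g\in G:[g,_n x]=1\}$ cover the profinite group $G$, so some $S_n$ contains a coset $gH$ of an open subgroup $H$. One then has a \emph{uniform} Engel-type identity on a set of finite index, and it is from this coset identity---not from any component-by-component or degree bookkeeping---that Wilson and Zelmanov extract a uniform ad-nilpotency index for $x^{*}$. This completely sidesteps the ``unbounded degrees'' obstacle you flag and requires no finiteness of the graded pieces.
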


Combining Lemmas \ref{lazard-ad} and \ref{leWZ} it is easy to deduce the following result.

\begin{lemma}\label{lemaconseque}
Let $x$ be an element of a profinite group $G$ for which there exists a positive integer $d$ such that $x^d$ is Engel. Then $x^*$ is ad-nilpotent.
\end{lemma}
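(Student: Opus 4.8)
The plan is to deduce the statement by combining Lemmas~\ref{lazard-ad} and~\ref{leWZ}; the only point requiring care is that the exponent $d$ need not be a power of the prime $p$ underlying the $N_p$-series $(*)$. First I would write $d = p^{k} m$ with $m$ coprime to $p$ and set $y = x^{p^{k}}$, so that $x^{d} = y^{m}$. Since $x^{d}$ is Engel, Lemma~\ref{leWZ} applies to it directly and yields that $(x^{d})^{*} = (y^{m})^{*}$ is ad-nilpotent in $L^{*}(G)$.

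Next I would transfer ad-nilpotency from $(y^{m})^{*}$ to $y^{*}$. Here the key observation is that each quotient $G_{i}/G_{i+1}$ of the series has exponent $p$, so in the homogeneous component containing $y^{*}$ the element $y^{m}$ satisfies $(y^{m})^{*} = \bar{m}\, y^{*}$, where $\bar{m}$ denotes the image of $m$ in $\mathbb{F}_{p}$. Because $m$ is coprime to $p$, the scalar $\bar{m}$ is a unit of $\mathbb{F}_{p}$; in particular raising to the $m$-th power does not change the largest index $i$ with $y \in G_{i}$, and $\mathrm{ad}\,(y^{m})^{*} = \bar{m}\,\mathrm{ad}\, y^{*}$. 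Consequently $\mathrm{ad}\, y^{*}$ is nilpotent, i.e. $y^{*} = (x^{p^{k}})^{*}$ is ad-nilpotent. (If $y^{*} = 0$ this is immediate.)

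Finally I would iterate Lemma~\ref{lazard-ad} a total of $k$ times to obtain $(\mathrm{ad}\, x^{*})^{p^{k}} = \mathrm{ad}\,(x^{p^{k}})^{*} = \mathrm{ad}\, y^{*}$. The right-hand side is nilpotent by the previous step, and a linear operator some power of which is nilpotent is itself nilpotent; hence $\mathrm{ad}\, x^{*}$ is nilpotent, which is precisely the assertion that $x^{*}$ is ad-nilpotent.

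The main (and essentially only) obstacle is the passage between $x^{d}$ and a genuine $p$-power $x^{p^{k}}$: Lemma~\ref{lazard-ad} only relates $x^{*}$ to $(x^{p^{j}})^{*}$ for $p$-power exponents, so it cannot be applied directly to the exponent $d$. The resolution is the remark that the prime-to-$p$ factor $m$ merely rescales the relevant homogeneous element by a unit of $\mathbb{F}_{p}$, and therefore affects neither its index nor its ad-nilpotency, after which Lazard's relation closes the argument.
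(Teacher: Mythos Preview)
Your argument is correct and is precisely the deduction the paper has in mind: the paper itself gives no proof, merely remarking that the result follows by combining Lemmas~\ref{lazard-ad} and~\ref{leWZ}. Your factorisation $d=p^{k}m$ together with the observation that the prime-to-$p$ part $m$ acts as a unit scalar on the relevant homogeneous component supplies exactly the detail needed to bridge the two lemmas.
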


A group $G$ is said to satisfy a coset identity if there is a nontrivial group word $w=w(x_1,x_2,\dots,x_k)$ and cosets $g_1H,g_2H,\dots,g_kH$ of a subgroup $H$ of $G$ of finite index such that   $w(g_1h_1,g_2h_2,\dots,g_kh_k)=1$ for all  $h_1,h_2,\dots,h_k\in H$; in this case we can also say that 
 the law $w\equiv1$ is satisfied on the cosets $g_1H,g_2H,\dots,g_kH$.  
 In \cite[Theorem 1]{WZ} Wilson and Zelmanov proved the following theorem.

\begin{theorem}\label{WZ-cosets}
If a profinite group $G$ has an open  subgroup $H$
and elements $g_1,\ldots,g_k$ such that a law $w\equiv 1$ is satisfied on the cosets $a_1H,\ldots,a_kH$, then for each prime $p$ the Lie algebra $L_p(G)$ satisfies a PI.
\end{theorem}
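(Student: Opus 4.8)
The plan is to fix a prime $p$ and reduce everything to the graded Lie algebra attached to the $p$-dimension central series, then convert the group-theoretic coset law into a genuine polynomial identity of $L_p(G)$. First I would reduce to the pro-$p$ case. Set $K=\bigcap_i D_i(G)$. Since each $G/D_i(G)$ is a finite $p$-group, the quotient $G/K$ is pro-$p$, and because $D_i(G/K)=D_i(G)/K$ we have $D_i(G/K)/D_{i+1}(G/K)\cong D_i(G)/D_{i+1}(G)$, whence $L_p(G/K)\cong L_p(G)$; moreover the coset law descends to $G/K$ via the images $g_iK$ and the open subgroup $HK/K$. So we may assume $G$ is pro-$p$. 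In a pro-$p$ group the subgroups $D_m=D_m(G)$ are open and form a base of neighbourhoods of the identity, so $D_m\le H$ for some $m$; since a law satisfied on the cosets $g_iH$ is a fortiori satisfied on the smaller cosets $g_iD_m$, we may replace $H$ by $D_m$ and assume $w\equiv 1$ holds on $g_1D_m,\ldots,g_kD_m$.

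Second, I would translate this into the associated graded setting. For $x\in D_i\setminus D_{i+1}$ the symbol $x^{*}=xD_{i+1}$ is a homogeneous element of $L_p(G)$; the image of a product of elements of equal degree is the sum of their symbols, and a group commutator maps to the Lie bracket of the leading symbols precisely when its degree equals the sum of the degrees, i.e. $[x,y]^{*}=[x^{*},y^{*}]$ in that case. The generators of $L_p(G)$ are the symbols $a^{*}$ with $a\in D_1=G$. To manufacture a Lie identity I would substitute into $w$ elements $h_i\in D_m$ assembled, via $p$-powers and commutators, from arbitrarily chosen group elements $a_1,\ldots,a_N\in G$, arranging the filtration degrees so that the contributions add up coherently, and then read off the image of $w(g_1h_1,\ldots,g_kh_k)$ in the successive quotients $D_t/D_{t+1}$. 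The fixed coset representatives $g_i$ live in the finite $p$-group $G/D_m$, and their effect is controlled by combinatorial arguments inside this finite quotient (taking suitable $p$-power exponents so that the unwanted $g_i$-contributions collect or vanish).

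The heart of the argument, and the step I expect to be the main obstacle, is the leading-term analysis: one must expand the collected form of $w(g_1h_1,\ldots,g_kh_k)$ and show that, for the chosen substitution, its lowest-degree contribution in the graded algebra equals $f(a_1^{*},\ldots,a_N^{*})$ for an explicit Lie polynomial $f$ obtained from $w$ by a leading-form/linearization procedure. Since the group element is identically trivial, this leading form must be $0$, giving the identity $f\equiv 0$ on $L_p(G)$. The delicate points are controlling the commutator-collection process so that the higher-degree error terms genuinely fall into deeper terms of the series and cannot interfere with the leading coefficient, and verifying that $f$ is a \emph{nonzero} element of the free Lie algebra — this nontriviality is what ultimately rests on $w$ being a nontrivial group word, and it is exactly what makes $f\equiv 0$ a genuine PI rather than the trivial identity. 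Once a single such nonzero $f$ is produced for the fixed prime $p$, the conclusion follows, the same argument applying verbatim to each prime.
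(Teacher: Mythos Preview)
The paper does not prove this theorem at all: it is quoted verbatim as Theorem~1 of Wilson and Zelmanov \cite{WZ} and used as a black box, so there is no ``paper's own proof'' to compare against. Your outline is therefore an attempt to reconstruct the Wilson--Zelmanov argument rather than anything the present authors supply.

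That said, your strategy matches the shape of the original proof: the reduction to the pro-$p$ quotient $G/K$ with $K=\bigcap_i D_i(G)$, the replacement of $H$ by some $D_m$, and the passage to the associated graded Lie algebra are exactly how Wilson and Zelmanov begin. The part you flag as ``the main obstacle'' is indeed the entire content of their paper: one does not simply read off a leading term by collecting commutators in $G$. Their actual mechanism works in the free pro-$p$ group $F$ on $x_1,\dots,x_k$ (or rather its Magnus power-series completion), maps $w$ to an element of the completed free associative algebra, extracts the lowest-degree homogeneous Lie component, and then uses a specialization argument plus multilinearization to show that this component gives a nontrivial identity of $L_p(G)$. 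The nontriviality of the resulting Lie polynomial comes from the faithfulness of the Magnus embedding of the free group, not from any computation inside $G/D_m$. Your sketch of ``assembling $h_i$ from $p$-powers and commutators of arbitrary $a_j$'' and hoping the $g_i$-contributions cancel is too vague to carry this through; in particular, the fixed coset representatives $g_i$ do not disappear by taking $p$-power exponents---they are absorbed into the constants of the free computation and only drop out after the homogeneity/linearization step. So the approach is right in spirit, but what you have written is a plan, not a proof: the substantive analytic step remains entirely undone.
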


\section{Proof of Theorem \ref{main} and Corollary \ref{coro1} }
We start this section by proving the following  useful result. 

\begin{lemma}\label{lpi}
Let $p$ be a prime. Suppose that a finite group $A$ acts coprimely on a profinite group $G$.  Assume that some power of each element in $C_G(A)$ is Engel in $C_G(A)$ for any $a\in A^{\#}$. Then $L_p(G)$ satisfies a multilinear  polynomial identity.
\end{lemma}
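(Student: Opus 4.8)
The plan is to reduce the assertion to the existence of a single coset identity in the fixed-point subgroup and then feed this into the Wilson--Zelmanov and Bahturin--Zaicev--Linchenko machinery of Section 2. Set $H=C_G(A)$. First I would record that the hypothesis forces every element of $H$ to possess a power that is Engel \emph{in} $H$: since $C_G(A)\le C_G(a)$ for every $a\in A^{\#}$, if $x\in H$ and $x^{d}$ is Engel in $C_G(a)$, then $x^{d}$ is a fortiori Engel in the smaller group $H$. Thus $H$ is a profinite group in which each element has an Engel power, and the argument is now insensitive to the precise structure of $A$.

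The decisive step is to convert this pointwise information into one uniform law holding on an open subgroup, and for this I would run a Baire category argument on the compact group $H\times H$. For integers $i,j\ge 1$ put
\[
Y_{i,j}=\{(x,y)\in H\times H:\ [y,_{j}x^{i}]=1\}.
\]
Each $Y_{i,j}$ is closed, being the preimage of $\{1\}$ under the continuous map $(x,y)\mapsto[y,_{j}x^{i}]$, and the countable family $\{Y_{i,j}\}$ covers $H\times H$: given $(x,y)$, first choose $i$ with $x^{i}$ Engel in $H$ and then $j$ with $[y,_{j}x^{i}]=1$. Since a compact Hausdorff space is Baire, some $Y_{i,j}$ has nonempty interior and hence contains a box $uK\times vK$, where $K$ is an open subgroup of $H$ and $u,v\in H$. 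This says exactly that the nontrivial word $w(x_{1},x_{2})=[x_{2},_{j}x_{1}^{i}]$ satisfies $w\equiv 1$ on the cosets $uK,vK$; that is, $H=C_G(A)$ satisfies a coset identity.

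It then remains to transport this through the Lie-theoretic correspondence. By Theorem \ref{WZ-cosets} applied to $C_G(A)$, the Lie algebra $L_p(C_G(A))$ satisfies a polynomial identity. The coprime action of $A$ on $G$ induces an action on $L(G)$ under which the fixed-point subalgebra $C_{L(G)}(A)$ is isomorphic to the Lie algebra associated with $C_G(A)$; using this I would deduce that $C_{L_p(G)}(A)$ satisfies a polynomial identity. As the characteristic $p$ is coprime to $|A|$, Theorem \ref{blz} then upgrades this to a polynomial identity for the whole of $L_p(G)$, and a routine multilinearization produces a multilinear one. The main obstacle is the coset-identity extraction of the second paragraph: both the Engel exponents and the Engel lengths are completely unbounded, and it is only the compactness of $C_G(A)$ that converts this unbounded local data into a single uniform identity, after which the cited theorems do the rest. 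A more technical point is that Theorem \ref{WZ-cosets} delivers the identity on $L_p(C_G(A))$ while the correspondence naturally identifies the full algebra $C_{L(G)}(A)$ with that of $C_G(A)$; reconciling these so that $C_{L_p(G)}(A)$ is seen to be a PI-algebra is where the machinery of Section 2 must be applied with care.
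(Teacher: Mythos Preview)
Your proposal is correct and follows essentially the same route as the paper: a Baire category argument on $C_G(A)\times C_G(A)$ to extract a coset identity $[x,_{n}y^{d}]\equiv 1$, then Theorem~\ref{WZ-cosets} to obtain a PI on the fixed-point subalgebra, and finally Theorem~\ref{blz} to pass to $L_p(G)$. You are also right to flag the subtlety about $L_p(C_G(A))$ versus $C_{L_p(G)}(A)$; the paper handles this in one line by invoking the isomorphism of $C_{L}(A)$ with the Lie algebra attached to the series $C_G(A)\cap D_i(G)$ and then applying Wilson--Zelmanov in that setting.
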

\begin{proof} Let $L=L_p(G)$.
In view of Theorem \ref{blz} it is sufficient to show that $C_L(A)$ satisfies a polynomial identity. We know that $C_L(A)$ is isomorphic to the Lie algebra associated with the central series of $C_G(A)$ obtained by intersecting $C_G(A)$ with the $p$-dimension central series of $G$. 
For each  pair $i,j$ of positive integers we set $$S_{ij}=\{(g,h)\in C_G(A)\times C_G(A):[g,_ih^j]=1\}.$$
Since the sets $S_{ij}$ are closed in $C_G(A)\times C_G(A)$ and their union is $C_G(A)\times C_G(A)$, by Baire's category theorem \cite[p.\ 200]{Baire} at least one of these sets has a non-empty interior. Therefore, we can find an open subgroup $H$ in $C_G(A)$, elements $u,v\in C_G(A)$ and  integers $n,d$ such that the identity $[x,_ny^d]\equiv 1$ is satisfied on the cosets $uH, vH$. Thus,  Theorem \ref{WZ-cosets} applies and $C_L(A)$ satisfies a  polynomial identity, as desired. 
\end{proof}

Next, we will prove  Theorem \ref{main} under the additional hypothesis that  $G$ is a pro-$p$ group.

\begin{proposition}\label{casepro-p}
Let $G$ be a pro-$p$ group satisfying the hypothesis of Theorem \ref{main}. Then $G$ is locally virtually nilpotent.
\end{proposition}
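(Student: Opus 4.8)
The plan is to reduce to the finitely generated case and then to prove that the associated Lie algebra $L=L_p(G)$ is nilpotent, from which $p$-adic analyticity and finally virtual nilpotency will follow. Since local virtual nilpotency is inherited by closed subgroups, it suffices to treat a finitely generated closed subgroup; replacing it by the (still finitely generated) subgroup generated by its $A$-orbit, I may assume that $G$ itself is finitely generated and $A$-invariant, and that for each $a\in A^{\#}$ every element of $C_G(a)$ has a power that is Engel in $G$. Applying Lemma \ref{lpi} (with $C_G(A)\le C_G(a)$, so that the hypothesis transfers) shows that $L=L_p(G)$ satisfies a polynomial identity, and since $G$ is finitely generated, $L$ is generated by the finite-dimensional first homogeneous component $D_1/D_2$.

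The heart of the argument is to verify the hypotheses of Zelmanov's Theorem \ref{Z1992} for $L$. I would first exploit the coprime action: because $A$ is noncyclic, $G$ is generated by the subgroups $C_G(a)$, $a\in A^{\#}$, so that $D_1/D_2=\sum_{a}\bigl(C_L(a)\cap(D_1/D_2)\bigr)$. Any homogeneous element $x^{*}$ with $x\in C_G(a)$ is ad-nilpotent by Lemma \ref{lemaconseque}, and a Lie commutator of such elements taken entirely from a single $C_G(a)$ again comes from $C_G(a)$; hence the subalgebra $M_a$ generated by $C_L(a)\cap(D_1/D_2)$ is a PI-algebra satisfying the hypotheses of Theorem \ref{Z1992}, so each $M_a$ is nilpotent. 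Lemma \ref{lemmanovo} then upgrades this to the uniform relations $[L,_{u_a}M_a]=0$ for suitable numbers $u_a$.

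To combine the finitely many subalgebras $M_a$ into the nilpotency of $L$, I would pass to the grading of $L$ induced by $A$: after extending the ground field to contain a primitive $q$-th root of unity, $L$ decomposes as $\bigoplus_{\chi}L_{\chi}$ over the characters $\chi$ of $A$, with $[L_{\chi},L_{\psi}]\subseteq L_{\chi\psi}$, and every component $L_{\chi}$ lies inside a single centralizer $C_L(a)$ (with $\langle a\rangle=\ker\chi$ when $\chi\neq1$, and $L_1=C_L(A)$ otherwise). The advantage of the grading is that every commutator in a bihomogeneous generating set of $L$ again falls into a single component, hence into a single centralizer, so it suffices to know that each graded component consists of ad-nilpotent elements; for the components coming from the degree-one part this is guaranteed by the relations $[L,_{u_a}M_a]=0$, which tensor up unchanged under the scalar extension, and Theorem \ref{Z1992} then yields that $L$, and hence $L_p(G)$, is nilpotent. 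I expect the main obstacle to be precisely the verification that \emph{every} graded component — not only the part generated in degree one — consists of ad-nilpotent elements after extending scalars, since ad-nilpotency of the individual homogeneous elements furnished by Lemma \ref{lemaconseque} does not transfer to arbitrary linear combinations; this is what makes the detour through Lemma \ref{lemmanovo} and the uniform bounds $u_a$ necessary, and the genuine work lies in propagating those bounds to the higher-degree fixed components.

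Finally, nilpotency of $L_p(G)$ together with Lazard's criterion \cite{L} shows that $G$ is $p$-adic analytic. I would then pass to an $A$-invariant open uniform, in particular torsion-free, subgroup $U$, which is again finitely generated. In $U$ the hypothesis strengthens: for $x\in C_U(a)=C_G(a)\cap U$ some power $x^{k}$ is Engel in $G$, hence in $U$, and in a torsion-free $p$-adic analytic group the correspondence between Engel elements and ad-nilpotent elements of the associated $\mathbb{Q}_p$-Lie algebra forces $x$ itself to be Engel in $U$. Thus every element of $C_U(a)$ is Engel in $U$ for each $a\in A^{\#}$, so Theorem \ref{engel} applies and $U$ is locally nilpotent; being finitely generated, $U$ is nilpotent, and therefore $G$ is virtually nilpotent. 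The secondary difficulty here is exactly this upgrade from a power being Engel to the element itself being Engel, which is where the torsion-freeness of the uniform subgroup is essential.
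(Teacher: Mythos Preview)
Your outline follows the same broad strategy as the paper (reduce to finitely generated, prove $L_p$ nilpotent via Zelmanov after scalar extension, deduce $p$-adic analyticity, finish with a linearity argument), but there is a genuine gap at the point you yourself flag, and your proposed mechanism for closing it does not work.

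The problem is with the subalgebras $M_a$. You define $M_a$ as the subalgebra generated by $C_L(a)\cap L_1$ and obtain $[L,_{u_a}M_a]=0$. After scalar extension, a commutator of common eigenvectors from $\overline{L}_1$ lands in some $\overline{L}_\chi$ in degree $j>1$, which sits inside $C_{\overline{L}}(a_\chi)$; but it has no reason to lie in $\overline{M_{a_\chi}}$, because its constituent generators came from \emph{other} centralizers $C_{\overline{L}_1}(a_{\chi'})$. So the global relation $[\overline{L},_{u_a}\overline{M_a}]=0$ says nothing about the ad-nilpotency of such a commutator, and ``propagating the bounds $u_a$ to the higher-degree fixed components'' is exactly what fails. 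The paper's fix is to apply Lemma~\ref{lemmanovo} \emph{locally} rather than globally: given any $y\in\overline{L}_{ij}$, write $y=\sum_s\omega^s x_s$ with $x_s\in L_{ij}$, let $J$ be the subalgebra generated by the $\omega^s x_s$, observe that every commutator in these generators has the form $\omega^\alpha x$ with $x\in L_{im}$ and is therefore ad-nilpotent by Lemma~\ref{lemaconseque}, conclude via Theorem~\ref{Z1992} that $J$ is nilpotent, and \emph{then} apply Lemma~\ref{lemmanovo} to this small $J$ to get $[\overline{L},_uJ]=0$ and hence ad-nilpotency of $y$.

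Your endgame also differs from the paper's, and the step ``in a torsion-free $p$-adic analytic group, $x^k$ Engel forces $x$ Engel via the $\mathbb{Q}_p$-Lie correspondence'' is plausible but not a one-liner; it requires identifying the set of Engel elements with a closed isolated subgroup. The paper sidesteps this entirely by making the reduction \emph{before} the Lie-algebra work: it passes at the outset to the open subgroup $H$ generated by all Engel elements of $G$ (open by Theorem~\ref{periodic}), proves $L_p(H)$ nilpotent, and then concludes directly from Gruenberg's theorem that the linear group $H$, being generated by Engel elements, is nilpotent. This avoids any appeal to uniform subgroups or to lifting Engel-ness from powers.
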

\begin{proof}

Since every finite subset of $G$ is contained in a finitely generated $A$-invariant closed subgroup, we may assume that $G$ is finitely generated.  Then, of course, it will be sufficient to show that $G$ is virtually nilpotent. 

Let $H$ be the subgroup generated by all Engel elements  in $G$. Note that $H$ is a normal $A$-invariant subgroup of $G$. Thus, for each $a\in A^{\#}$, we have  $C_{G/H}(a)=C_G(a)H/H$, which  is a torsion subgroup. By Theorem \ref{periodic}, $G/H$ is finite, and so $H$ is open. Since $G$ is finitely generated,  \cite[Proposition 4.3.1]{W} implies that $H$ is finitely  generated, as well. We claim that $H$ is nilpotent.

Indeed, we denote by $D_j=D_j(H)$ the terms of the $p$-dimension central series of $H$. Let $L=L_p(H)$ be the Lie algebra associated with the group $H$ and 
$L_j=L\cap(D_j/D_{j+1})$. Thus, $L=\oplus_{j\geq 1}L_j$. 
The group $A$ naturally acts on $L$. Let $A_1,\ldots,A_{q+1}$ be the distinct maximal subgroups of $A$. Set $L_{ij}=C_{L_j}(A_i)$. We know that any $A$-invariant subgroup is generated by the centralizers of $A_i$. Therefore for any $j$ we have $$L_j=\sum_{i=1}^{q+1}L_{ij}.$$
Further, for any $l\in L_{ij}$ there exists an element $x\in D_j\cap C_H(A_i)$ such that $l=xD_{j+1}$. 
Hence, there exists a positive integer $d$ such that $x^d$ is
Engel in $ H $. It follows from Lemma \ref{lemaconseque} that $l$ is ad-nilpotent in $L$. Thus,

\begin{equation}\label{le1}
\mbox{any element in}\  L_{ij}  \ \mbox{is ad-nilpotent in}\  L.
\end{equation}

Let $\omega$ be a primitive $q$th root of unity and $\overline{L}=L\otimes\mathbb{F}_p[\omega]$. Here $\mathbb{F}_p$ stands for the field with $p$ elements. We can view $\overline{L}$ both as a Lie algebra over $\mathbb{F}_p$ and as that over $\mathbb{F}_p[\omega]$. It is natural to identity $L$ with the subalgebra $L\otimes 1$ of $\overline{L}$. We note that if an element $x\in L$ is ad-nilpotent of index $r$, say, then the correspondent element $x\otimes1$ is ad-nilpotent in $\overline{L}$ of the same index $r.$

Put $\overline{L}_j=L_j\otimes\mathbb{F}_p[\omega]$. Then $\overline{L}=\left<\overline{L}_1\right>$ and $\overline{L}$ is the direct sum of the homogeneous components $\overline{L}_j$. The group $A$ naturally acts on $\overline{L}$ and we have $\overline{L}_{ij}=C_{\overline{L}_j}(A_i)$, where $\overline{L}_{ij}=L_{ij}\otimes\mathbb{F}_p[\omega]$. Let us show that
\begin{equation}\label{le2}
\mbox{any element}\ y\in\overline{L}_{ij}\ \mbox{is ad-nilpotent in}\ \overline{L}.
\end{equation}
Since $\overline{L}_{ij}=L_{ij}\otimes\mathbb{F}_p[\omega]$, we can write
$$y=x_0+\omega x_1+\omega^2 x_2\cdots+\omega^{q-2}x_{q-2}$$
for suitable $x_0, x_1, x_2,\ldots, x_{q-2}\in L_{ij}$. In view of (\ref{le1}) it is easy to see that each of the summands $\omega^sx_s$ is ad-nilpotent in $\overline{L}$. Let $J$ be the subalgebra of $\overline{L}$ generated by $x_0,$ $ \omega x_1,\omega^2 x_2,\ldots, \omega^{q-2}x_{q-2}$. We wish to show that $J$ is nilpotent. 

Note that $J\subseteq C_{\overline{L}}(A_i)$. A commutator of weight $t$ in the generators of $J$ has form $\omega^\alpha x$ for some $x$ that belongs to $L_{im}$, where $m=tj$. By (\ref{le1}) the element $x$ is ad-nilpotent in $\overline{L}$ and so such a commutator must be ad-nilpotent.  By Lemma \ref{lpi} $L$ satisfies a multilinear polynomial identity. The multilinear identity is also satisfied in $\overline{L}$ and so it is satisfied in $J$, since $J\subseteq C_{\overline{L}}(A_i)$. Hence, by Theorem \ref{Z1992} $J$ is nilpotent. Now, applying Lemma \ref{lemmanovo}, we get that  there exists some positive integer $u$ such that $[\overline{L}, \underbrace{J,\ldots,J}_u ]=0$. This proves (\ref{le2}).

Since $A$ is abelian and the ground field is now a splitting field for $A$, every component $\overline{L}_j$ decomposes in the direct sum of common eigenspaces for $A$. In particular,  $\overline{L}_1$ is spanned by finitely many common eigenvectors for $A$, since $H$ is a finitely generated pro-$p$ group. Hence, $\overline{L}$ is generated by finitely many common eigenvectors for $A$ from $\overline{L}_1$. Since $A$ is noncyclic every common eigenvector is contained in the centralizer $C_{\overline{L}}(A_i)$ for some $i\leq q+1$.

We also note that any commutator in common eigenvectors is again a common eigenvector for $A$. Therefore, if $l_1, l_{2},\ldots\in \overline{L}_1$ are common eigenvectors for $A$ generating $\overline{L}$, then any commutator in those generators belongs to some $\overline{L}_{ij}$ and so, by (\ref{le2}), is ad-nilpotent.

As we have mentioned earlier, $\overline{L}$ satisfies a polynomial identity. It follows from Theorem \ref{Z1992} that $\overline{L}$ is nilpotent. Since $L$ embeds into $\overline{L}$, we  deduce that $L$ is nilpotent as well. 

According to Lazard \cite{L} the nilpotency of $L$ is equivalent to $H$ being $p$-adic analytic (for details see  \cite[A.1 in Appendice and  Sections 3.1 and 3.4 in Ch.\ III]{L} or  \cite[1.(k) and 1.(o) in Interlude A]{GA}). It  follows from \cite[7.19 Theorem]{GA} that $H$ admits a faithful linear representation over the field of $p$-adic numbers. 

Since  $H$ is a finitely generated pro-$p$ group and can be  generated by  Engel elements,  by using an inverse limit argument combined  with the Burnside Basis Theorem \cite[5.3.2]{Rob}, we see that $H$ is  generated by finitely many Engel elements. A result of Gruenberg \cite[Theorem 0]{G} says that in a linear group the Hirsch-Plotkin radical coincides with the set of Engel elements. Then it follows  that $H$ is nilpotent, as claimed. This concludes the proof. 
\end{proof}

As usual, for a profinite group $G$ we denote by $\pi(G)$ the set of prime divisors of the orders of finite continuous homomorphic images of $G$. We say that $G$ is a $\pi$-group if $\pi(G)\subseteq\pi$ and $G$ is a $\pi'$-group if $\pi(G)\cap\pi=\emptyset$. If $m$ is an integer, we denote by $\pi(m)$ the set of prime divisors of $m$. If $\pi$ is a set of primes, we denote by $O_\pi(G)$ the maximal normal $\pi$-subgroup of $G$ and by $O_{\pi'}(G)$ the maximal normal $\pi'$-subgroup. 

Now, we are ready to deal with the proof of Theorem \ref{main}.

\begin{proof}[{\bf Proof of Theorem \ref{main}}] It will be convenient first to prove the theorem under the additional hypothesis that $G$ is pronilpotent. Thereby $G$ is the Cartesian product of its Sylow subgroups. Choose $a\in A^{\#}$. For each pair $i,j$ of positive integers we set $$S_{ij}=\{(x,y)\in G\times C_G(a):[x,_iy^j]=1\}.$$
Arguing as in the proof of Lemma \ref{lpi} we deduce that there exist an open normal subgroup $H$ in $G$, elements $u\in G, v\in C_G(a)$ and  positive integers $n,d$ such that $[ul,_n(vk)^d]=1$ for any $l\in H$ and any $k\in H\cap C_G(a)$. 

Let $[G:H]=m$ and let  $\pi_1=\pi(m)$ be the set of primes dividing $m$. Denote  $O_{\pi'_1}(G)$ by $K$ and write $J$ for the Sylow subgroups of $G$ corresponding to the primes that belong to $\pi_1$. Since $G=J\times K=JH$ we deduce that $[x,_ny^d]=1$, for all $x\in K$ and $y\in C_K(a)$.  Now set $\pi_2=\pi(d)$ and $\pi=\pi_1\cup\pi_2$. Denote $O_{\pi'}(G)$ by $T$. Since by construction $(p,d)=1$ for each prime $p\in \pi(T)$, it follows that  every element $y$ in $C_T(a)$ is $n$-Engel in $T$.

Of course, the set $\pi$  and the integer $n$ depend only on the choice of $a\in A^{\#}$, so strictly speaking they should be denoted by $\pi_a$ and $n_a$, respectively. We choose such $\pi_a$ and $n_a$ for any $a\in A^{\#}$. Set $\pi_0=\cup_{a\in A^{\#}}\pi_a$, $n={\rm max}\{n_a:a\in A^{\#}\}$ and $R=O_{\pi_0'}(G)$. 

By construction we see that, for each $a\in A^{\#}$, every element of $C_R(a)$ is $n$-Engel in $R$. Using an  inverse limit argument we deduce from \cite[Theorem 1.2]{shusa} that $R$ is $s$-Engel for some integer $s$. Thus  \cite[Theorem 5]{WZ} implies  that $R$ is locally nilpotent.
Let $p_1,\ldots, p_r$ be the finitely many primes in $\pi$ and $P_1,\ldots, P_r$ be the corresponding Sylow subgroups of $G$. Then $G=P_1\times\ldots\times P_r\times R$ and therefore it is sufficient to show that each subgroup $P_i$ is locally virtually nilpotent. But, this is immediate from Proposition \ref{casepro-p}. 
This proves the result in the particular case where $G$ is pronilpotent.

Let us now  drop the assumption that $G$ is pronilpotent. Without loss of generality we can assume that $G$ is finitely generated. Set $K$ be the closure of the  subgroup of $G$ generated by all Engel elements in $G$. Note that $K$ is a normal $A$-invariant subgroup. Since 
$C_{G/K}(a)=C_G(a)K/K$,
for any $a\in A^{\#}$, in particular we know that each centralizer is torsion. Now Theorem \ref{periodic} implies that  $G/K$ is finite and, therefore, $K$ is finitely generated. By  Baer's Theorem \cite[12.3.7]{Rob} we deduce  that $K$ is a pronilpotent group.  Hence, using what we showed above, we conclude that $K$ is virtually nilpotent and  this completes the proof.
\end{proof}

We close this section by giving the  proof of Corollary   \ref{coro1}.
\begin{proof}[{\bf Proof of Corollary \ref{coro1}}]
Suppose that the corollary is false. Then, for each pair of  positive integers $i,j$, we can choose a group $G_{ij}$  satisfying the hypothesis of the corollary and having all of  its normal subgroups  either with nilpotency class at  least $i$ or with index in $G_{ij}$ at least $j$ (or both properties). In each group $G_{ij}$, we fix generators $g_1^{ij},\ldots,g_m^{ij}$.

Let $G$ be the Cartesian product of the groups $G_{ij}$, assuming that we use the lexicographic order to construct  the Cartesian product. Note that $G$ is a profinite group admitting a coprime action of $A$ and  such that all $d$-th power of elements in $C_G(a)$ are $n$-Engel in $G$ for each $a\in A^{\#}$. Thus, by Theorem \ref{main}, $G$ is locally virtually nilpotent. 

 In $G$ consider the closed subgroup $D$ generated by $m$ elements
$$g_1=(g_1^{11},g_1^{12},\ldots), \ldots, g_m=(g_m^{11},g_m^{12},\ldots). $$
Thus, $D$ has a open nilpotent normal subgroup $K$ of  class $c$, say. Let $r$ be the index of $K$ in $D$ and observe that both  $c$ and $r$ are numbers  that depend only on $m,n,q$ and $d$. We remark that each of the groups $G_{ij}$ is isomorphic to a finite quotient of  $D$. Thus, each subgroup $G_{ij}^r$ is nilpotent of class at most $c$. Furthermore, by the positive solution of the  restricted Burnside problem \cite{Z0,Z1,zenew}, we know the index of $G_{ij}^r$ in $G_{ij}$  depends only on $m$ and $r$. This leads to a contradiction.
\end{proof}
 
\section{{\bf Proof of Theorem \ref{main2} and Corollary \ref{coro2}}}

Let $F$ denote the free group on free generators $x_1, x_2, \ldots$.  Recall that a positive
word in $X=\{x_1, x_2,\ldots\}$ is any nontrivial element of $F$ not involving the inverses of the $x_i$. A positive (or semigroup) law of a group
$G$ is a nontrivial identity of the form $u \equiv v $ where $u, v$ are positive words in $F$, holding under every substitution of elements of $X$ by elements of $G$. The maximum of lengths of $u$ and $v$ is called the degree of the law $u \equiv v$. 

By a result of Mal'cev \cite{Malcev}  (see also \cite{NT}) a group that is an extension of a nilpotent group by a group of finite exponent satisfies a positive law. More precisely, Mal'cev discovered a positive law $M_c (x, y)$ in two variables and of degree $2^c$ that holds in any nilpotent group of class $c$. Therefore, if $G$ is an extension of a nilpotent group of class $c$ by a group of exponent $e$, then $G$ satisfies the positive law $M_c(x^e,y^e)$. The explicit form of the Mal'cev law will not be required in this paper.

Next result is a profinite version of  \cite[Theorem A]{Pavel-law}. 

\begin{lemma}\label{positivelaw}
Let $q$ be a prime and $A$ an elementary abelian group of order $q^3$. Suppose that $A$ acts coprimely on a profinite group $G$ and assume that $C_G(a)$ satisfies a  positive law  of degree $n$ for each $a\in A^{\#}$. Then $G$ satisfies a positive law as well. 
\end{lemma}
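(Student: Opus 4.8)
The plan is to reduce the statement to the finite case and invoke the finite version \cite[Theorem A]{Pavel-law}, the only genuine additional work being to extract from the family of all finite quotients a \emph{single} positive law valid throughout. First I would record that, because the action is coprime, the $A$-invariant open normal subgroups of $G$ are cofinal: given any open normal subgroup $M$ of $G$, the subgroup $N=\bigcap_{a\in A}a(M)$ is $A$-invariant, open and normal, and is contained in $M$. Hence $G$ is the inverse limit of the finite quotients $G/N$, with $N$ ranging over the $A$-invariant open normal subgroups. For each such $N$ the group $A$ acts coprimely on $G/N$, and by the standard coprime fact $C_{G/N}(a)=C_G(a)N/N$ is a homomorphic image of $C_G(a)$; since positive laws are inherited by quotients, $C_{G/N}(a)$ satisfies the same positive law of degree $n$ for every $a\in A^{\#}$. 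Thus every finite quotient $G/N$ satisfies the hypotheses of the finite theorem.

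Next I would apply \cite[Theorem A]{Pavel-law} to each $G/N$: this yields that $G/N$ satisfies a positive law whose degree is bounded by some $D=D(n,q)$ depending only on $n$ and $q$ (and the fixed order $q^3$ of $A$), but \emph{not} on $N$. The crucial observation is that, up to renaming of the variables, there are only finitely many positive laws $u\equiv v$ of degree at most $D$; call this finite set $\mathcal{W}=\{w_1,\dots,w_k\}$. I then claim that one fixed $w\in\mathcal{W}$ is satisfied by every finite quotient. Suppose not; then for each index $i$ there is an $A$-invariant open normal subgroup $N_i$ such that $G/N_i$ does not satisfy $w_i$. Put $N=\bigcap_{i=1}^{k}N_i$, again $A$-invariant, open and normal. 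By the previous paragraph $G/N$ satisfies the hypotheses of \cite[Theorem A]{Pavel-law}, hence satisfies some $w_j\in\mathcal{W}$. But $N\subseteq N_j$ gives a surjection $G/N\to G/N_j$, and positive laws pass to quotients, so $G/N_j$ satisfies $w_j$, contradicting the choice of $N_j$. Therefore a single law $w\in\mathcal{W}$ holds in all finite continuous quotients of $G$.

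Finally, since $G$ embeds continuously into the Cartesian product of its finite quotients $G/N$ with $\bigcap N=1$, an identity $w\equiv 1$ holding in every $G/N$ holds in $G$ as well: for any substitution of elements of $G$, the value of $w$ lies in every $N$ and hence equals $1$. Thus $G$ satisfies the positive law $w$, as required. The main obstacle is precisely the uniformity issue handled in the middle step: \cite[Theorem A]{Pavel-law} furnishes a priori only a law \emph{depending on the quotient}, and the whole point is to upgrade the uniform bound on the degree into one law valid simultaneously on all quotients; the finiteness of the set of bounded-degree positive laws, together with the intersection of the finitely many offending kernels, is what makes this possible. (Alternatively one could route through Mal'cev's theorem, observing that a bounded-degree positive law forces membership in a fixed variety $\mathcal{N}_c\mathcal{B}_e$ and hence the single Mal'cev law $M_c(x^e,y^e)$, but the pigeonhole argument above avoids this detour.)
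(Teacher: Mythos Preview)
Your proof is correct and shares the paper's overall structure: reduce to $A$-invariant finite quotients via the inverse limit, apply \cite[Theorem~A]{Pavel-law} to each quotient, and then pass a single law back up to $G$. The one point of divergence is the uniformity step. The paper simply observes that the \emph{proof} of \cite[Theorem~A]{Pavel-law} already produces a specific Mal'cev law $M_c(x^k,y^k)$ with $c,k$ depending only on $n$ and $q$, so the same law holds in every quotient and hence in $G$; there is nothing further to argue. You instead extract only the degree bound $D(n,q)$, note that up to relabelling there are finitely many positive laws of degree at most $D$, and run a compactness/pigeonhole argument on the finite intersection of offending kernels to pin down one law. Both are valid; the paper's route is shorter because the needed uniformity is already explicit inside Shumyatsky's proof, while your pigeonhole argument is more self-contained in that it would work even if one only knew a degree bound and not the precise form of the law. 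You actually anticipate the paper's approach in your final parenthetical remark, so you are aware the detour can be avoided.
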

\begin{proof}	
The result follows easily by using an inverse limit argument and noting that, by the proof of \cite[Theorem A]{Pavel-law}, any finite quotient of $G$ over an $A$-invariant open normal subgroup $N$  satisfies the positive law $M_c(x^k,y^k)$, for some positive integers $c$ and $k$ which do not depend on the choice of $N$ but only on $n$ and $q$.	
\end{proof}
We are ready to embark on the proof of Theorem \ref{main2}. First we consider the case where $G$ is a pro-$p$ group.
\begin{proposition}\label{q^3pro-p}
Let $G$ be a pro-$p$ group satisfying the hypothesis of Theorem \ref{main2}. Then $G$ is locally virtually nilpotent.
\end{proposition}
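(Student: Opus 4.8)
The plan is to follow the pro-$p$ argument of Proposition \ref{casepro-p} as closely as possible, the new difficulty being that a fixed point is now only power-Engel \emph{inside its own centralizer} rather than in $G$; the extra automorphism available when $|A|=q^3$ is exactly what repairs this. As there, since every finite subset of $G$ lies in a finitely generated $A$-invariant closed subgroup, I would assume $G$ finitely generated and prove it virtually nilpotent. Because $C_G(A)\le C_G(a)$ for every $a\in A^{\#}$, a power of any element of $C_G(A)$ that is Engel in $C_G(a)$ is a fortiori Engel in $C_G(A)$; hence Lemma \ref{lpi} applies and $L:=L_p(G)$ satisfies a multilinear polynomial identity. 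I then extend scalars to $\overline L=L\otimes\mathbb F_p[\omega]$ with $\omega$ a primitive $q$th root of unity, giving the weight decomposition $\overline L=\bigoplus_{\nu\in\widehat A}\overline L_\nu$, where $[\overline L_\nu,\overline L_\mu]\subseteq\overline L_{\nu\mu}$ and $C_{\overline L}(a)=\bigoplus_{\nu(a)=1}\overline L_\nu$. As $G$ is finitely generated, $\overline L$ is generated by finitely many common eigenvectors (weight vectors) lying in its first homogeneous component.

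The first real step is to show that every homogeneous weight vector $z\in\overline L_\nu$ is ad-nilpotent \emph{inside} $C_{\overline L}(a)$ whenever $a\in\ker\nu$ (so that $z\in C_{\overline L}(a)$). Writing $z=\sum_s\omega^sz_s$ with $z_s\in C_L(a)=L_p(C_G(a))$ homogeneous of the same degree, each $z_s$ is the image of a single element of $C_G(a)$, a power of which is Engel in $C_G(a)$; by Lemma \ref{lemaconseque} (applied in the profinite group $C_G(a)$) each $z_s$, and hence each $\omega^sz_s$, is ad-nilpotent in $C_{\overline L}(a)$. Since $C_L(a)$ is a subalgebra of $L$ it inherits the multilinear PI, so running the argument that established \eqref{le2} in Proposition \ref{casepro-p} inside $C_{\overline L}(a)$ — form the subalgebra $J$ generated by the $\omega^sz_s$, note all its commutators are ad-nilpotent, invoke Theorem \ref{Z1992} to get $J$ nilpotent, then Lemma \ref{lemmanovo} to get $[C_{\overline L}(a),J,\dots,J]=0$ — yields that $z$ is ad-nilpotent in $C_{\overline L}(a)$.

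The decisive step, which I expect to be the main obstacle, is to upgrade this to ad-nilpotency of $z$ in the whole of $\overline L$ using \emph{only} the control on fixed points. For $z\in\overline L_\nu$ the map $\mathrm{ad}\,z$ multiplies weights by $\nu$, so it preserves each block $\bigoplus_k\overline L_{\mu\nu^{k}}$. If $\nu\neq 1$ then $K=\ker\nu$ has order $q^2$, i.e.\ dimension $2$ over $\mathbb F_q$, so the restriction of $\mu$ to $K$ has nontrivial kernel and I may choose $a\in K$, $a\neq 1$, with $\mu(a)=1$; then $(\mu\nu^{k})(a)=1$ for all $k$, the whole block lies in $C_{\overline L}(a)$, and $\mathrm{ad}\,z$ acts nilpotently on it by the previous step. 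The case $\nu=1$ is handled the same way, placing each $\overline L_\mu$ into a suitable $C_{\overline L}(a)$. As $\widehat A$ is finite this bounds $\mathrm{ad}\,z$ on all of $\overline L$, so every weight vector is ad-nilpotent in $\overline L$. I emphasise that this is precisely where $|A|=q^3$ is essential: for $|A|=q^2$ the space $K$ would be one-dimensional and the functional $\mu|_K$ could be injective, leaving no fixed point to absorb the block — which is exactly why the weaker ``Engel in $C_G(a)$'' hypothesis forces the larger group. Now every generating weight vector, and every commutator of such (again a weight vector), is ad-nilpotent and $\overline L$ is PI, so Theorem \ref{Z1992} makes $\overline L$, hence $L=L_p(G)$, nilpotent; by Lazard \cite{L} (see also \cite{GA}) $G$ is $p$-adic analytic and admits a faithful linear representation over the field of $p$-adic numbers.

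Finally I must pass from linearity to virtual nilpotency, and here I would use the positive law rather than Gruenberg's theorem. By \cite[Theorem 1.1]{BS} each $C_G(a)$ is locally virtually nilpotent, hence, being finitely generated, virtually nilpotent and so satisfies a positive law; since $A^{\#}$ is finite these laws can be taken of a common degree $n$, and Lemma \ref{positivelaw} then shows that $G$ itself satisfies a positive law. Applying this to a dense finitely generated abstract subgroup $G_0\le G$ (which is linear and inherits the law), the Tits alternative forbids a nonabelian free subgroup, so $G_0$ is virtually soluble, and a finitely generated virtually soluble group with a positive law is virtually nilpotent by Milnor--Wolf. A nilpotent finite-index subgroup of $G_0$ then has nilpotent open closure in $G$ of the same class, whence $G$ is virtually nilpotent, as required.
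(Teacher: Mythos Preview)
Your argument is correct, but it diverges from the paper's proof in two places, and in both the paper takes a shorter route.

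\textbf{The ad-nilpotency step.} Your block argument on the cosets $\{\mu\nu^k\}$ works, but is unnecessary. The paper instead uses the maximal subgroups $A_i\le A$ (of order $q^2$) and sets $L_{ij}=C_{L_j}(A_i)$. For $l\in L_{ij}$ one has $l\in C_L(a)$ for \emph{every} $a\in A_i^{\#}$, and by Lemma~\ref{lemaconseque} $l$ is ad-nilpotent in each $C_L(a)$; since $A_i$ is noncyclic, $L=\sum_{a\in A_i^{\#}}C_L(a)$, and taking the maximum of the finitely many indices gives ad-nilpotency of $l$ in $L$ directly. After that the paper literally mimics Proposition~\ref{casepro-p}: extend scalars and run the $J$-subalgebra trick to obtain ad-nilpotency of elements of $\overline L_{ij}$. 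Your Step~1 is essentially this $J$-trick performed inside $C_{\overline L}(a)$; what you then replace by the block argument, the paper gets in one line from the noncyclicity of $A_i$. (A small slip: $C_L(a)$ is the Lie algebra of $C_G(a)$ with respect to the induced series $D_j(G)\cap C_G(a)$, not $L_p(C_G(a))$; this does not affect your use of Lemma~\ref{lemaconseque}.)

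\textbf{The endgame.} Up to the point where $G$ satisfies a positive law you agree with the paper, except that you should say explicitly why each $C_G(a)$ is finitely generated: the paper invokes Lubotzky--Mann to deduce that the $p$-adic analytic group $G$ has finite rank, hence so do all closed subgroups. From the positive law, however, the paper concludes in one stroke via Burns--Macedo\'nska--Medvedev \cite{BMM} that $G$ is (nilpotent)-by-(finite exponent), and then Zelmanov's theorem on torsion profinite groups makes the top finite. Your detour through the Tits alternative on a dense abstract subgroup and then ``Milnor--Wolf'' is valid but longer; the result you actually need (a finitely generated soluble group with a positive law is virtually nilpotent) is Rosenblatt's theorem rather than Milnor--Wolf, and the passage from a finite-index nilpotent subgroup of $G_0$ to an open nilpotent subgroup of $G$ relies on $G^{n}$ being open in a $p$-adic analytic pro-$p$ group --- true, but worth stating. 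The paper's appeal to \cite{BMM} and \cite{zelmanov} avoids all of this.
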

\begin{proof}
	
Since every finite set of $G$ is contained in a finitely generated $A$-invariant closed subgroup, we may assume that $G$ is finitely generated.  It will be sufficient to show that $G$ is virtually nilpotent.

We denote by $D_j=D_j(G)$ the terms of the $p$-dimension central series of $G$. Let $L=L_p(G)$ be the Lie algebra associated with the group $G$ and $L_j=L\cap(D_j/D_{j+1})$. Thus, $L=\oplus_{j\geq 1}L_j$. The group $A$ naturally acts on $L$. 
Let $A_1,\ldots,A_{s}$ be the distinct maximal subgroups of $A$. Since each subgroup $A_i$ is noncyclic we get  $L=\sum_{a\in A_i}C_{L}(a)$, for every $i\leq s.$
Set $L_{ij}=C_{L_j}(A_i)$. Hence for any $j$ we get  $$L_j=\sum_{i=1}^{s}L_{ij}.$$ 
Thus for any $l\in L_{ij}$ there exists an element $x\in D_j\cap C_G(A_i)$ such that $l=xD_{j+1}$. By assumption, some power of $x$ is Engel in $C_G(A_i)\subseteq C_G(a)$, for some $a\in A_i$. It follows from Lemma \ref{lemaconseque} that $l$ is ad-nilpotent in $C_L(a)$ for every $a\in A_i^{\#}$. Since $L=\sum_{a\in A_i}C_{L}(a)$, we deduce that 
any element $l\in L_{ij}$ is ad-nilpotent in $L$. Now, mimicking the argument that we used  in the proof of Proposition \ref{casepro-p}, with only obvious changes,  one can show that $L$ is nilpotent.  We omit  further details.

According to Lazard \cite{L} the nilpotency of $L$ is equivalent to $G$ being $p$-adic analytic.  The Lubotzky-Mann theory \cite{LM} ensures that $G$ has finite rank. Then, each centralizer $C_G(a)$ is finitely generated. Now, applying  \cite[Theorem 1.1]{BS}, we know that all centralizers $C_G(a)$ are  virtually nilpotent. Thus, there exist a $p$-power $k$ and a positive integer $c$ such that,  for each $a\in A^{\#}$, the subgroup   $C_G(a)^k$ has nilpotency class at most $c$.  A result of  Mal'cev \cite{Malcev} implies now  that   all centralizers $C_G(a)$ satisfy the positive law $M_c(x^k,y^k)$, and so,  Lemma \ref{positivelaw}  yields that $G$ satisfies a positive law too. In accordance with the theorem by  Burns, Macedo{\'n}ska and Medvedev \cite{BMM} the group $G$  is an extension of a nilpotent group $N$ by a group of finite exponent. Finally,  it follows from \cite[Theorem 1]{zelmanov} that $G/N$ is finite and this  completes the proof.
\end{proof}

Recall that the Fitting subgroup of a finite group $H$ is the unique largest normal nilpotent subgroup of $H$, which will be denoted by $F(H)$. 
Similarly, for any profinite group $G$, we will denote by $F(G)$ the (unique) largest normal pronilpotent subgroup of $G$. We remark that any Engel element in a profinite group $G$ belongs to $F(G)$. Indeed, let $K$ be the closed subgroup of $G$ generated by the set of all Engel elements in $G$. By  Baer's theorem \cite[12.3.7]{Rob},  the image of $K$ in every finite quotient of $G$ is nilpotent. Since $K$ is normal in $G$, we see that $K$ is pronilpotent, and so, in particular  contained in $F(G)$. 

\begin{proof}[{\bf Proof of Theorem \ref{main2}}]

 
Let $A_1, \ldots, A_s$ be the distinct maximal subgroups of $A$. Fix $A_i$ and take $x\in C_G(A_i)$. Note that $C_G(A_i)\subseteq C_G(a)$ for any $a\in A_i$. So, by assumption, there exists a positive integer $u_a$, depending on $a$, such that $x^{u_a}\in F(C_G(a))$. Then there exist positive integers $u_1,\ldots,u_{q^2}$ such that 
$x^{u_1\cdots u_{q^2}}\in F(C_G(a)),$\ \  for all $a\in A_i.$

Let $N$ be any $A$-invariant open normal subgroup of $G$. By \cite[Lemma 2.6]{Pavel-law}, we know that the image of $x^{u_1\ldots u_{q^2}}$ in the finite quotient $G/N$ belongs to $\bigcap_{a\in A_i^{\#}} F(C_{G/N}(a))\leq F(G/N)$. Thus, the element $x^{u_1\ldots u_{q^2}}$ belongs to  $F(G)$. Since $A_i$ and $x$ were chosen arbitrarily we can repeat the argument for any $x\in C_G(A_i)$ and  $i\in\{1,\ldots, s\}$. Then the images of the centralizers $C_G(A_i)$ in the quotient group $\overline{G}=G/F(G)$ are all torsion subgroups. 

Let  $a\in A^{\#}$ and consider $K=C_{\overline{G}}(a)$. The group $A$  naturally acts on $ K $ inducing an automorphism group $ A_0 $. Further, for any $ \alpha\in A_0 $ the centralizer $ C_K(\alpha) $ is exactly  $ C_{\overline{G}}(A_i), $ where $ A_i=\langle a,a_1\rangle $ and $ a_1 $ is the element of $ A $ that induces $\alpha$ (in the action of $ A $ on $ K $). We claim that  $C_{K}(\alpha)$ is torsion, for every $ \alpha\in A_0. $ Indeed, if $A_0$ has order $ q^2 $, then it follows from Theorem \ref{periodic} that $K$ is torsion.  If the order of $ A_0 $ is  less than $ q^2 $, then  $ K=C_{\overline{G}}(a)\leq C_{\overline{G}}(A_i) $ and so,  $ K $ is torsion as well. Thus, we get that  $C_{\overline{G}}(a) $ is torsion for any $ a\in A^{\#} $. Applying again Theorem \ref{periodic} we deduce that  $ \overline{G}=G/F(G) $ is torsion, and, in particular, locally finite.

The argument above shows that  it is enough to prove the theorem under the additional assumption that $G$ is pronilpotent.  
Choose now  $a\in A^{\#}$. For each  pair  $i,j$ of positive integers we set $$S_{ij}=\{(x,y)\in C_G(a)\times C_G(a):[x,_iy^j]=1\}.$$ With an argument similar to that used in the proof of Theorem \ref{main}, with only obvious changes,  we can  show that $G=P_1\times\ldots\times P_r\times R$, where $R$ is a locally nilpotent subgroup of $G$  and  $P_i$ are finitely many Sylow subgroups of $G$. Therefore it is sufficient to show that each subgroup $P_i$ is locally virtually nilpotent.  This follows from Proposition \ref{q^3pro-p} and  the proof is complete.
\end{proof}

We conclude observing that  the proof of Corollary \ref{coro2} is analogous to that of Corollary \ref{coro1} and can be obtained, with obvious changes, by replacing every appeal to Theorem \ref{main} in the proof of Corollary \ref{coro1} by an appeal to Theorem \ref{main2}. Therefore we omit the further details.



\end{document}